\documentclass[12pt,a4paper]{amsart}

\usepackage{amssymb}

\usepackage[T2A]{fontenc}
\usepackage[cp1251]{inputenc}
\usepackage[english]{babel}
\usepackage{srcltx}

\advance\textwidth20mm \advance\hoffset-10mm
\advance\textheight20mm \advance\voffset-10mm
\sloppy


\theoremstyle{plain}
\newtheorem{theorem}{Theorem}[section]
\newtheorem{corollary}[theorem]{Corollary}
\newtheorem{lemma}[theorem]{Lemma}
\newtheorem*{thma}{Proposition}
\theoremstyle{remark}
\newtheorem{remark}{Remark}[section]
\newtheorem*{conj}{Conjecture}

\newtheorem*{example}{Example}

\newcommand\R{\mathbb{R}}
\newcommand\N{\mathbb{N}}
\newcommand\Z{\mathbb{Z}}
\newcommand\cF{\mathcal{F}}
\newcommand\cS{\mathcal{S}}
\newcommand\cR{\mathcal{R}}
\newcommand\cH{\mathcal{H}}
\newcommand\cP{\mathcal{P}}
\newcommand\<{\langle}
\renewcommand\>{\rangle}

\newcommand\sign{\operatorname{sign}}
\newcommand\supvrai{\operatorname{supvrai}}

\begin{document}

\title[Pitt's inequalities and uncertainty principle]
{Pitt's inequalities and uncertainty principle\\ for generalized Fourier transform}

\author{D.~V.~Gorbachev}
\address{D.~Gorbachev, Tula State University,
Department of Applied Mathematics and Computer Science,
300012 Tula, Russia}
\email{dvgmail@mail.ru}

\author{V.~I.~Ivanov}
\address{V.~Ivanov, Tula State University,
Department of Applied Mathematics and Computer Science,
300012 Tula, Russia}
\email{ivaleryi@mail.ru}

\author{S.~Yu.~Tikhonov}
\address{S. Tikhonov, ICREA, Centre de Recerca Matem\`{a}tica, and UAB\\
Campus de Bellaterra, Edifici~C
08193 Bellaterra (Barcelona), Spain}
\email{stikhonov@crm.cat}

\date{\today}
\keywords{generalized Dunkl transform, Pitt inequality, sharp constants,
uncertainty principle} \subjclass{42B10, 33C45, 33C52}

\thanks{The first and the second authors were partially supported by RFFI
N\,13-01-00045, Ministry of education and science of Russian Federation
(N\,5414GZ, N\,1.1333.2014K), and Dmitry Zimin's Dynasty Foundation. The third
author was partially supported by MTM2014-59174-P, 2014 SGR 289, and RFFI
13-01-00043.}

\begin{abstract}
We study the two-parameter family of unitary operators
\[
\cF_{k,a}=\exp\Bigl(\frac{i\pi}{2a}\,(2\<k\>+{d}+a-2
)\Bigr)
\exp\Bigl(\frac{i\pi}{2a}\,\Delta_{k,a}\Bigr),
\]
which are called $(k,a)$-generalized Fourier transforms and defined by the
$a$-deformed Dunkl harmonic oscillator
$\Delta_{k,a}=|x|^{2-a}\Delta_{k}-|x|^{a}$, $a>0$, where $\Delta_{k}$ is the
Dunkl Laplacian. Particular cases of such operators are the Fourier and Dunkl
transforms. The restriction of $\cF_{k,a}$ to radial functions is given by the
$a$-deformed Hankel transform $H_{\lambda,a}$.

We obtain necessary and sufficient conditions for the weighted $(L^{p},L^{q})$
Pitt inequalities to hold for the $a$-deformed Hankel transform. Moreover, we
prove two-sided Boas--Sagher type estimates for the general monotone functions.
We~also prove sharp Pitt's inequality for $\cF_{k,a}$ transform in
$L^{2}(\R^{d})$ with the corresponding weights. Finally, we establish the
logarithmic uncertainty principle for~$\cF_{k,a}$.
\end{abstract}

\maketitle

\section{Introduction}
Let $\R^{d}$ be the real space of $d$ dimensions, equipped with a scalar
product $\<x,y\>$ and a norm $|x|=\sqrt{\<x,x\>}$. The Fourier transform is
defined by
\[
\cF(f)(y)=(2\pi)^{-d/2}\int_{\R^{d}}f(x)e^{-i\<x,y\>}\,dx.
\]
R.~Howe \cite{How87} found the spectral description of $\cF$ using the harmonic
oscillator $-(\Delta-|x|^{2})/2$ and its eigenfunctions forming the basis in
$L^{2}(\R^{d})$:
\[
\cF=\exp\Bigl(\frac{i\pi
d}{4}\Bigr)\exp\Bigl(\frac{i\pi}{4}\,(\Delta-|x|^{2})\Bigr),
\]
where $\Delta$ is the Laplace operator. This representation has been widely
used to define the fractional Fourier transform and Clifford algebra-valued
analogues, see~\cite{DeB12}.

One of the generalizations of the Fourier transform is the Dunkl transform
$\cF_{k}$ \cite{Dun92}, which is defined with the help of a root system
$R\subset \R^{d}$, a reflection group $G\subset O(d)$, and multiplicity
function $k\colon R\to \R_{+}$ such that $k$ is $G$-invariant. If~$k\equiv 0$,
we have $\cF_{k}=\cF$.

The differential-difference operator $\Delta_{k}$, the Dunkl Laplacian, plays
an important role in the Dunkl analysis, see, e.g., \cite{Ros02}. For $k\equiv
0$ we get $\Delta_{k}=\Delta$.

S.~Ben Sa\"{\i}d, T.~Kobayashi, and B.~{\O}rsted \cite{BenKobOrs12} defined
$a$-deformed Dunkl-type harmonic oscillator as follows
\[
\Delta_{k,a}=|x|^{2-a}\Delta_{k}-|x|^{a},\quad a>0.
\]
Following \cite{How87}, they constructed a two-parameter unitary operator, the
$(k,a)$-generalized Fourier transforms,
\begin{equation}\label{gen-dun-trans}
\cF_{k,a}=\exp\Bigl(\frac{i\pi}{2a}\,(2\lambda_{k}+a)\Bigr)
\exp\Bigl(\frac{i\pi}{2a}\,\Delta_{k,a}\Bigr)
\end{equation}
in $L^{2}(\R^{d},d\mu_{k,a})$ with a norm
\[
\|f\|_{2,d\mu_{k,a}}=\biggl(\int_{\R^{d}}|f(x)|^{2}\,d\mu_{k,a}(x)\biggr)^{1/2},
\]
where
\[
\lambda_{k}=\frac{d}{2}-1+\<k\>,\qquad
\<k\>=\frac{1}{2}\sum_{\alpha\in R}k(\alpha),
\]
\[
d\mu_{k,a}(x)=c_{k,a}v_{k,a}(x)\,dx,\qquad
v_{k,a}(x)=|x|^{a-2}v_{k}(x),
\]
\[
v_{k}(x)=\prod_{\alpha\in R}|\<\alpha,x\>|^{k(\alpha)},\qquad
c^{-1}_{k,a}=\int_{\R^{d}}e^{-|x|^{a}/a}v_{k,a}(x)\,dx.
\]
If $a=2$, \eqref{gen-dun-trans} recovers the Dunkl transform, and if $a=2$ and
$k\equiv 0$ the Fourier transform. For $a\ne 2$, \eqref{gen-dun-trans} is a
deformed Fourier and Dunkl operators. In particular, if $a =1$ and $k\equiv 0$,
the operator $\cF_{k,a}$ is the unitary inversion operator of the Schr\"odinger
model of the minimal representation of the group $O(N+1,2)$,
see~\cite{KobMan11}.

The operator $\cF_{k,a}$ is a unitary operator, that is, for $a>0$,
$2\<k\>+d+a>2$, it is a bijective linear operator such that for any function
$f\in L^{2}(\R^{d},d\mu_{k,a})$ the Plancherel formula holds
\cite[Th.~5.1]{BenKobOrs12}
\begin{equation}\label{planch-eq}
\bigl\|\cF_{k,a}(f)(y)\bigr\|_{2,d\mu_{k,a}}=\bigl\|f(x)\bigr\|_{2,d\mu_{k,a}}.
\end{equation}

The main goal of this paper is to prove Pitt's inequality
\begin{equation}\label{pitt-ineq}
\bigl\||y|^{-\beta}\cF_{k,a}(f)(y)\bigr\|_{2,d\mu_{k,a}}\le
C(\beta,k,a)\bigl\||x|^{\beta}f(x)\bigr\|_{2,d\mu_{k,a}},
\qquad
f\in \cS(\R^{d}),
\end{equation}
with the sharp constant
\[
C(\beta,k,a)=a^{-2\beta/a}\,
\frac{\Gamma\bigl(a^{-1}(\lambda_{k}+a/2-\beta)\bigr)}
{\Gamma\bigl(a^{-1}(\lambda_{k}+a/2+\beta)\bigr)},
\]
and the logarithmic uncertainty principle
\begin{multline}\label{unc-princ}
\int_{\R^{d}}\ln{}(|x|)|f(x)|^{2}\,d\mu_{k,a}(x)+
\int_{\R^{d}}\ln{}(|y|)|\cF_{k,a}(f)(y)|^{2}\,d\mu_{k,a}(y)\\
{}\ge
\frac{2}{a}\,\Bigl\{\psi\Bigl(\frac{\lambda_{k}}{a}+\frac{1}{2}\Bigr)+\ln a\Bigr\}
\|f\|_{2,d\mu_{k,a}}^{2},
\end{multline}
provided that
\[
0\le \beta<\lambda_{k}+\frac{a}{2},\qquad 4\lambda_{k}+a\ge 0.
\]
Here and in what follows, $\Gamma(t)$ is the gamma function,
$\psi(t)=\Gamma'(t)/\Gamma(t)$ the psi function, and $\cS(\R^{d})$ the Schwartz
space.

Inequalities \eqref{pitt-ineq} and \eqref{unc-princ} were proved by
W.~Beck\-ner \cite{Bec95} for the Fourier transform, by S.~Omri \cite{Omr11}
for the Dunkl transform on radial functions, by F.~Sol\-tani \cite{Sol14} for
the one-dimensional Dunkl transform, and by the authors \cite{GorIvaTik15} for
the general Dunkl transform. Regarding inequality \eqref{pitt-ineq} for the
Fourier transform see also \cite{Bec08, Eil01, Her77,Yaf99}.

A study of analytical properties of $\cF_{k,a}$-transform was first conducted
in \cite{BenKobOrs12}. Very recently, weighted norm inequalities were obtained
in \cite{Joh15}. In particular, the author raises the question on the sharp
logarithmic uncertainty principle for $\cF_{k,a}$.

The rest of the paper is organized as follows. In Section 2 we study the
$a$-deformed Hankel transforms which are the restriction of $\cF_{k,a}$ to
radial functions. In particular, we find necessary and sufficient conditions
for the Pitt inequalities with power weights to hold and we obtain sharp Pitt's
inequality in $L^{2}$.

Section~\ref{gm-sec} deals with boundedness properties of the $a$-deformed
Hankel transform of general monotone functions. In this case we improve the
range of parameters in the Pitt inequalities and prove the reverse
inequalities. In particular, we obtain two-sided inequalities of the
Boas--Sagher type.

Section~\ref{pitt-sec} is devoted to the proof of inequality \eqref{pitt-ineq}.
To show \eqref{pitt-ineq}, we use the following decomposition
\begin{equation}\label{expansion}
L^{2}(\R^{d},d\mu_{k,a})=\sum_{n=0}^{\infty}\oplus
\cR_{n}^{d}(v_{k,a}),\qquad \cR_{n}^{d}(v_{k,a})=\cR_{0}^{d}(v_{k,a})\otimes
\cH_{n}^{d}(v_{k}),
\end{equation}
where $\cR_{0}^{d}(v_{k,a})$~is the space of radial function, and
$\cH_{n}^{d}(v_{k})$ is the space of $k$-spherical harmonics of degree $n$.
Since $\cR_{n}^{d}(v_{k,a})$ is invariant under the operator $\cF_{k,a}$, it is
enough to study inequality \eqref{pitt-ineq} on $\cR_{n}^{d}(v_{k,a})$.

In Section~\ref{unc-princ-sec}, we obtain the logarithmic uncertainty principle
\eqref{unc-princ} for $\cF_{k,a}$-transform, which follows from
\eqref{pitt-ineq}. It is worth mentioning that the Heisenberg uncertainty
principle for $\cF_{k,a}$ was proved in \cite{BenKobOrs12}. It reads as
follows: {\itshape for $d\in \N$, $ k\ge 0$, $a > 0$, and $\,2\lambda_{k}+a>
0$, one has
\[
\bigl\||x|^{a/2}f(x)\bigr\|_{2,d\mu_{k,a}}
\bigl\||y|^{a/2}\cF_{k,a}(f)(y)\bigr\|_{2,d\mu_{k,a}}\ge
(2\lambda_{k}+a)\bigl\|f\bigr\|^{2}_{2,d\mu_{k,a}}.
\]
The equality holds if and only if the function $f$ is of the form
$f(x)=Ce^{-c|x|^{a}}$ for some $a, c>0$.} Various uncertainty relations for
$\cF_{k,a}$ were also studied in \cite{Joh15}.

We conclude by Section~\ref{last} where we study the uniform boundedness
properties of the kernel $B_{k,a}(y,x)$ in the integral transform expression
$\cF_{k,a}(f)(y)=\int_{\R^{d}}B_{k,a}(y,x)f(x)\,d\mu_{k,a}(x)$.

\section{$\cF_{k,a}$-transform on radial functions}

Let $\lambda\ge -1/2$, $J_{\lambda}(t)$ be the classical Bessel function of
degree $\lambda$, and
\[
j_{\lambda}(t)=2^{\lambda}\Gamma(\lambda+1)t^{-\lambda}J_{\lambda}(t)
\]
be the normalized Bessel function. Let also
\[
b_{\lambda}^{-1}=\int_{0}^{\infty}e^{-t^{2}/2}t^{2\lambda+1}\,dt=2^{\lambda}\Gamma(\lambda+1),\qquad
d\nu_{\lambda}(r)=b_{\lambda}r^{2\lambda+1}\,dr,\quad r\in \R_{+}.
\]
The norm in $L^{p}(\R_{+},d\nu_{\lambda})$, $1\le p<\infty$, is given by
\[
\|f\|_{p,d\nu_{\lambda}}=\biggl(\int_{\R_{+}}|f(r)|^{p}\,d\nu_{\lambda}(r)\biggr)^{1/p}.
\]
Moreover, let $\|f\|_{\infty}=\supvrai_{r\in \R_{+}}|f(r)|$.

The Hankel transform is defined as follows
\[
H_{\lambda}(f)(\rho)=\int_{\R_{+}}f(r)j_{\lambda}(\rho r)\,d\nu_{\lambda}(r).
\]
It is a unitary operator in $L^{2}(\R_{+},d\nu_{\lambda})$ and
$H_{\lambda}^{-1}=H_{\lambda}$ \cite[Chap.~7]{BatErd53}.

Note that the Hankel transform is a restriction of the Fourier transform on
radial functions if $\lambda=d/2-1$, and of the Dunkl transforms on radial
functions if $\lambda=\lambda_k=d/2-1+\<k\>$.

Let $\cS(\R_{+})$ be the Schwartz space on $\R_{+}$. For $f\in \cS(\R_{+})$, we
are interested in the Pitt inequality
\begin{equation}\label{zv}
\bigl\|\rho^{-\gamma}H_{\lambda}(f)(\rho)\bigr\|_{q,d\nu_{\lambda}}\le
c_{pq}(\beta,\gamma,\lambda)\bigl\|r^{\beta}f(r)\bigr\|_{p,d\nu_{\lambda}}
\end{equation}
with the sharp constant $c_{pq}(\beta,\gamma,\lambda)$. Here and in what
follows, we assume that $1<p\le q<\infty$.

L.~De~Carli \cite{Car08} showed that $c_{pq}(\beta,\gamma,\lambda)$ is finite
if and only if
\[
\beta-\gamma=2(\lambda+1)\Bigl(\frac{1}{p'}-\frac{1}{q}\Bigr)
\]
and
\begin{equation}\label{page3-2}
\Bigl(\frac{1}{2}-\frac{1}{p}\Bigr)(2\lambda+1)+
\max\Bigl\{\frac{1}{p'}-\frac{1}{q},0\Bigr\}\le \beta<\frac{2(\lambda+1)}{p'},
\end{equation}
where $p'$ is the H\"{o}lder conjugate of $p$.

The sharp constant $c_{pq}(\beta,\gamma,\lambda)$ is known only for $p=q=2$ and
$\gamma=\beta$ \cite{Yaf99, Omr11}:
\[
c_{22}(\beta,\beta,\lambda)=c(\beta,\lambda)=2^{-\beta}\,
\frac{\Gamma\bigl(2^{-1}(\lambda+1/2-\beta)\bigr)}
{\Gamma\bigl(2^{-1}(\lambda+1/2+\beta)\bigr)},\qquad
0\le \beta<\lambda+1.
\]

For $a>0$ we denote by $L^{p}(\R_{+},d\nu_{\lambda,a})$ the space of
complex-valued functions endowed with a norm
\[
\|f\|_{p,d\nu_{\lambda,a}}=\biggl(\int_{\R_{+}}|f(r)|^{p}\,d\nu_{\lambda,a}(r)\biggr)^{1/p},\qquad
1\le p<\infty,
\]
\[
d\nu_{\lambda,a}(r)=b_{\lambda,a}r^{2\lambda+a-1}\,dr,
\]
where the normalization constant is given by
\[
b_{\lambda,a}^{-1}=\int_{0}^{\infty}e^{-t^{a}/a}t^{2\lambda+a-1}\,dt=
a^{2\lambda/a}\Gamma\Bigl(\frac{2\lambda}{a}+1\Bigr).
\]
For $4\lambda+a\ge 0$, we define the $a$-deformed Hankel transform
\[
H_{\lambda,a}(f)(\rho)=
\int_{\R_{+}}f(r)j_{2\lambda/a}\Bigl(\frac{2}{a}\,(\rho r)^{a/2}\Bigr)\,d\nu_{\lambda,a}(r).
\]
Note that in the paper \cite[Sec. 5.5.3]{BenKobOrs12} a slightly different
definition of the $a$-deformed Hankel transform has been used (with a different
normalization). We find our definition more convenient to use.

The Hankel transform $H_{\lambda,a}$~is a unitary operator in
$L^{2}(\R_{+},d\nu_{\lambda,a})$. Moreover, if $\lambda=\lambda_{k}$, by
Bochner-type identity, the $\cF_{k,a}$ transform of a radial function is
written by the $H_{\lambda,a}$ transform (see \cite[Th. 5.21]{BenKobOrs12}):
for $f(x)=f_{0}(r)$, $r=|x|$, $\rho=|y|$, we have
\[
\cF_{k,a}(f)(y)=(\cF_{k,a}(f))_0(\rho),\qquad (\cF_{k,a}(f))_0(\rho)=H_{\lambda,a}(f_0)(\rho).
\]
Changing variables
\[
r=\Bigl(\frac{a}{2}\Bigr)^{1/a}s^{2/a},\quad
\rho=\Bigl(\frac{a}{2}\Bigr)^{1/a}\theta^{2/a},\quad
f\Bigl(\Bigl(\frac{a}{2}\Bigr)^{1/a}s^{2/a}\Bigr)=g(s),
\]
we arrive at
\begin{align*}
\int_{0}^{\infty}r^{\beta p}|f(r)|^{p}\,d\nu_{\lambda,a}(r)&=
b_{\lambda,a}\Bigl(\frac{a}{2}\Bigr)^{(\beta p+2\lambda)/a}
\int_{0}^{\infty}s^{2\beta p/a}|g(s)|^{p}s^{4\lambda/a+1}\,ds\\
&=b_{2\lambda/a}\Bigl(\frac{a}{2}\Bigr)^{\beta p/a}
\int_{0}^{\infty}s^{2\beta p/a}|g(s)|^{p}s^{4\lambda/a+1}\,ds\\
&=\Bigl(\frac{a}{2}\Bigr)^{\beta p/a}
\int_{0}^{\infty}s^{2\beta p/a}|g(s)|^{p}\,d\nu_{2\lambda/2}(s)
\end{align*}
and
\begin{multline*}
\int_{0}^{\infty}\rho^{-\gamma q}|H_{\lambda,a}(f)(\rho)|^{q}\,d\nu_{\lambda,a}(\rho)\\
=\Bigl(\frac{a}{2}\Bigr)^{-\gamma q/a}
\int_{0}^{\infty}\theta^{-2\gamma q/a}
\Bigl|H_{\lambda,a}(f)\Bigr(\Bigl(\frac{a}{2}\Bigr)^{1/a}\theta^{2/a}\Bigl)\Bigr|^{q}\,
d\nu_{2\lambda/2}(\theta),
\end{multline*}
where
\begin{align*}
H_{\lambda,a}(f)\Bigr(\Bigl(\frac{a}{2}\Bigr)^{1/a}\theta^{2/a}\Bigl)&=
b_{\lambda,a}\Bigl(\frac{a}{2}\Bigr)^{2\lambda/a}
\int_{0}^{\infty}g(s)j_{2\lambda/a}(\theta s)s^{4\lambda/a+1}\,ds\\
&=b_{2\lambda/a}\int_{0}^{\infty}g(s)j_{2\lambda/a}(\theta s)s^{4\lambda/a+1}\,ds=
H_{2\lambda/a}(g)(\theta).
\end{align*}
Therefore,
\begin{equation}\label{fgeq}
\frac{\bigl\|\rho^{-\gamma}H_{\lambda,a}(f)(\rho)\bigr\|_{q,d\nu_{\lambda,a}}}
{\bigl\|r^{\beta}f(r)\bigr\|_{p,d\nu_{\lambda,a}}}=
\Bigl(\frac{a}{2}\Bigr)^{-(\beta+\gamma)/a}\,
\frac{\bigl\|\theta^{-2\gamma/a}H_{2\lambda/a}(g)(\theta)\bigr\|_{q,d\nu_{2\lambda/a}}}
{\bigl\|s^{2\beta/a}g(s)\bigr\|_{p,d\nu_{2\lambda/a}}}.
\end{equation}
Hence, the sharp constant $c_{pq}(\beta,\gamma,\lambda,a)$ in Pitt's inequality
\begin{equation}\label{zv3}
\bigl\|\rho^{-\gamma}H_{\lambda,a}(f)(\rho)\bigr\|_{q,d\nu_{\lambda,a}}\le
c_{pq}(\beta,\gamma,\lambda,a)\bigl\|r^{\beta}f(r)\bigr\|_{p,d\nu_{\lambda,a}}
\end{equation} is related to the constant
$c_{pq}(\beta,\gamma,\lambda)$ given by \eqref{zv}
as follows
\[
c_{pq}(\beta,\gamma,\lambda,a)=\Bigl(\frac{a}{2}\Bigr)^{-(\beta+\gamma)/a}
c_{pq}\Bigl(\frac{2\beta}{a},\frac{2\gamma}{a},\frac{2\lambda}{a}\Bigr).
\]
Therefore, using the above mentioned results by De~Carli, we arrive at the following two theorems.
\begin{theorem}\label{thm-1}
Let $4\lambda+a\ge 0$ and $1< p\le q< \infty$. Pitt's inequality \eqref{zv3}
holds if and only if
\begin{align*}
&1)\quad \beta-\gamma=(2\lambda+a)\Bigl(\frac{1}{p'}-\frac{1}{q}\Bigr),\\
&2)\quad \Bigl(\frac{1}{2}-\frac{1}{p}\Bigr)\Bigl(2\lambda+\frac{a}{2}\Bigr)+
\frac{a}{2}\max\Bigl\{\frac{1}{p'}-\frac{1}{q},0\Bigr\}\le
\beta<\frac{2\lambda+a}{p'}.
\end{align*}
\end{theorem}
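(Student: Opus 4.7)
The proof plan is essentially a transfer argument: the hard analytic content is already encapsulated in De Carli's theorem for the classical Hankel transform $H_{\mu}$, and the identity \eqref{fgeq} derived just above the statement converts inequality \eqref{zv3} for $H_{\lambda,a}$ into the classical inequality \eqref{zv} for $H_{2\lambda/a}$. So I would simply make this reduction precise and then translate De Carli's conditions through the change of variables.

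First, I would observe that the substitution $r=(a/2)^{1/a}s^{2/a}$, $\rho=(a/2)^{1/a}\theta^{2/a}$, $f((a/2)^{1/a}s^{2/a})=g(s)$ is a bijection $\cS(\R_+)\to \cS(\R_+)$, and from \eqref{fgeq} the ratio of the two sides of \eqref{zv3} with parameters $(\beta,\gamma,\lambda,a)$ equals $(a/2)^{-(\beta+\gamma)/a}$ times the ratio of the two sides of \eqref{zv} applied to $g$ with parameters $(2\beta/a,2\gamma/a,2\lambda/a)$. Consequently \eqref{zv3} holds for all $f\in \cS(\R_+)$ if and only if \eqref{zv} holds for all $g\in \cS(\R_+)$ with the shifted parameters, and in that case the sharp constants satisfy the relation already displayed. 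The standing assumption $\lambda\ge -1/2$ for the classical Hankel transform corresponds to $2\lambda/a\ge -1/2$, which is exactly the hypothesis $4\lambda+a\ge 0$ in the theorem.

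Next, I would apply De Carli's criterion cited in the excerpt with $(\tilde\beta,\tilde\gamma,\tilde\lambda)=(2\beta/a,2\gamma/a,2\lambda/a)$. The balance condition $\tilde\beta-\tilde\gamma=2(\tilde\lambda+1)(1/p'-1/q)$ becomes, after multiplying through by $a/2$, the first condition $\beta-\gamma=(2\lambda+a)(1/p'-1/q)$. The two-sided range condition \eqref{page3-2} transforms, under the same multiplication by $a/2$ and the identities $2\tilde\lambda+1=4\lambda/a+1$ and $2(\tilde\lambda+1)=(2\lambda+a)\cdot(2/a)$, into
\[
\Bigl(\frac{1}{2}-\frac{1}{p}\Bigr)\Bigl(2\lambda+\frac{a}{2}\Bigr)+\frac{a}{2}\max\Bigl\{\frac{1}{p'}-\frac{1}{q},0\Bigr\}\le \beta<\frac{2\lambda+a}{p'},
\]
which is the second condition. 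Combining necessity and sufficiency in De Carli's theorem with the equivalence given by \eqref{fgeq}, we obtain both directions of Theorem~\ref{thm-1}.

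There is no real obstacle here beyond careful bookkeeping of the scaling, since \eqref{fgeq} reduces the problem to a result that is assumed available. The only minor point to watch is that the class $\cS(\R_+)$ is preserved under the change of variable $s\mapsto (a/2)^{1/a}s^{2/a}$ (which requires $a>0$, part of the standing assumptions), so the space of test functions does not shrink or enlarge under the reduction; this makes the equivalence between \eqref{zv3} and \eqref{zv} truly bidirectional, and hence the necessity half of De Carli's theorem yields the necessity half of Theorem~\ref{thm-1}.
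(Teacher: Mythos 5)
Your proposal is correct and is essentially the paper's own proof: the change of variables producing \eqref{fgeq} reduces \eqref{zv3} to De~Carli's characterization for $H_{2\lambda/a}$ (with $4\lambda+a\ge 0$ corresponding to $2\lambda/a\ge -1/2$), and the parameter translation $(\tilde\beta,\tilde\gamma,\tilde\lambda)=(2\beta/a,2\gamma/a,2\lambda/a)$ you perform is exactly how the paper obtains conditions 1) and 2). The only minor inaccuracy is your claim that $s\mapsto(a/2)^{1/a}s^{2/a}$ gives a bijection of $\cS(\R_{+})$ onto itself — for non-integer $2/a$ smoothness at the origin can fail — which the paper sidesteps via Remark~\ref{rem-1} by working with the dense subclass $\cS_{0}(\R_{+})$ of functions vanishing to infinite order at $0$.
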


\begin{theorem}\label{thm-2}
Let $4\lambda+a\ge 0$ and $0\le \beta<\lambda+a/2$. Then Pitt's inequality
\[
\bigl\|\rho^{-\beta}H_{\lambda,a}(f)(\rho)\bigr\|_{2,d\nu_{\lambda,a}}\le
c(\beta,\lambda,a)\bigl\|r^{\beta}f(r)\bigr\|_{2,d\nu_{\lambda,a}}
\]
holds and the constant
\[
c(\beta,\lambda,a)=a^{-2\beta/a}\,
\frac{\Gamma\bigl(a^{-1}(\lambda+a/2-\beta)\bigr)}
{\Gamma\bigl(a^{-1}(\lambda+a/2+\beta)\bigr)}
\]
is sharp.
\end{theorem}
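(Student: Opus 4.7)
The plan is to exploit the change-of-variable identity \eqref{fgeq} to reduce the claim to the already-known sharp $L^{2}$ Pitt inequality for the classical Hankel transform $H_{2\lambda/a}$. Specializing \eqref{fgeq} to $p=q=2$ and $\gamma=\beta$ shows that the best constant in the inequality of Theorem~\ref{thm-2} equals $(a/2)^{-2\beta/a}$ times the best constant in the corresponding inequality for $H_{2\lambda/a}$ with parameters $(2\beta/a,2\beta/a,2\lambda/a)$. In particular, existence of a finite sharp constant for $H_{\lambda,a}$ is equivalent to existence of one for $H_{2\lambda/a}$, so the heavy analytic input (the $L^{2}$ extremal problem for the standard Hankel transform) can be entirely outsourced to the Yafaev--Omri formula recalled in the text.

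Next, I would verify that the hypotheses transport correctly under the rescaling $(\beta,\lambda)\mapsto(2\beta/a,2\lambda/a)$. The condition $4\lambda+a\ge 0$ is equivalent to $2\lambda/a\ge -1/2$, which is precisely the Bessel regularity assumption needed to define $H_{2\lambda/a}$. The inequality $0\le\beta<\lambda+a/2$ becomes $0\le 2\beta/a<2\lambda/a+1$, which is the admissible range for the classical sharp constant $c(\beta',\lambda')$. Substituting $\beta'=2\beta/a$ and $\lambda'=2\lambda/a$ into that formula and combining with the prefactor $(a/2)^{-2\beta/a}$ yields, after a short simplification of the arguments of the gamma functions, the constant $a^{-2\beta/a}\Gamma(a^{-1}(\lambda+a/2-\beta))/\Gamma(a^{-1}(\lambda+a/2+\beta))$ displayed in the statement.

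Sharpness is inherited automatically: the substitution $r=(a/2)^{1/a}s^{2/a}$ induces a bijection $f\mapsto g$ of $\cS(\R_{+})$ that preserves both ratios in \eqref{fgeq} up to the explicit factor, so any near-extremizer for the classical $H_{2\lambda/a}$-Pitt inequality lifts to a near-extremizer for $H_{\lambda,a}$, and the sharpness of $c(2\beta/a,2\lambda/a)$ transfers verbatim. I expect no substantial obstacle here: the whole argument is a bookkeeping reduction once \eqref{fgeq} is in hand, and the only mildly technical step is rearranging the gamma-function arguments and the prefactor $(a/2)^{-2\beta/a}\cdot 2^{-2\beta/a}=a^{-2\beta/a}$ to match the form stated in the theorem.
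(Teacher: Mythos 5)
Your proposal is correct and is essentially the paper's own argument: Theorems~\ref{thm-1} and \ref{thm-2} are obtained precisely by specializing the change-of-variable identity \eqref{fgeq} (with $p=q=2$, $\gamma=\beta$) and importing De~Carli's conditions and the Yafaev--Omri sharp constant for the classical Hankel transform $H_{2\lambda/a}$, the parameter ranges and sharpness transferring exactly as you describe (with Remark~\ref{rem-1} supplying the density needed to move between $f$ and $g$ in $\cS_{0}(\R_{+})$). Note only that your gamma-function bookkeeping uses the correct classical formula $c(\beta,\lambda)=2^{-\beta}\Gamma\bigl((\lambda+1-\beta)/2\bigr)/\Gamma\bigl((\lambda+1+\beta)/2\bigr)$ on $0\le\beta<\lambda+1$; the displayed version in the text with $\lambda+1/2$ is a misprint, and with that correction the prefactor computation $(a/2)^{-2\beta/a}2^{-2\beta/a}=a^{-2\beta/a}$ gives the stated constant.
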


Let us now verify that $c(\beta,\lambda,a)$ is decreasing with $\lambda$.
\begin{lemma}\label{lem-1}
If $\alpha>0$, then
\begin{equation}\label{ineq}
\frac{\Gamma(t+\alpha)}{\Gamma(\tau+\alpha)}<\frac{\Gamma(t)}{\Gamma(\tau)}, \qquad 0<t<\tau.
\end{equation}
\end{lemma}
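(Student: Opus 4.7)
The plan is to recast \eqref{ineq} as the statement that the function
\[
F(t)=\frac{\Gamma(t+\alpha)}{\Gamma(t)},\qquad t>0,
\]
is strictly increasing on $(0,\infty)$. Indeed, dividing both sides of \eqref{ineq} by $\Gamma(t)\Gamma(\tau+\alpha)$ turns the claim into $F(t)<F(\tau)$ for $0<t<\tau$, so proving monotonicity of $F$ is equivalent to proving the lemma.

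My preferred route is to take the logarithmic derivative. From
\[
\ln F(t)=\ln\Gamma(t+\alpha)-\ln\Gamma(t)
\]
one gets $F'(t)/F(t)=\psi(t+\alpha)-\psi(t)$. Using the series representation
\[
\psi'(s)=\sum_{n=0}^{\infty}\frac{1}{(s+n)^{2}}>0,\qquad s>0,
\]
the psi function is strictly increasing on $(0,\infty)$, hence $\psi(t+\alpha)>\psi(t)$ whenever $\alpha>0$ and $t>0$. Since $F(t)>0$, we conclude $F'(t)>0$ on $(0,\infty)$, which yields the required strict monotonicity and therefore \eqref{ineq}.

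An alternative, equally short argument uses the Beta-function identity
\[
\frac{\Gamma(t)\Gamma(\alpha)}{\Gamma(t+\alpha)}=\int_{0}^{1}s^{t-1}(1-s)^{\alpha-1}\,ds,
\]
so that
\[
\frac{1}{F(t)}=\frac{1}{\Gamma(\alpha)}\int_{0}^{1}s^{t-1}(1-s)^{\alpha-1}\,ds.
\]
Since $s\in(0,1)$ forces $s^{t-1}$ to be strictly decreasing in $t$, the right-hand side is strictly decreasing in $t$, hence $F$ is strictly increasing. Either way the proof is a one-line verification; there is essentially no obstacle, and the only decision is whether to present the $\psi$-derivative argument or the Beta-integral argument. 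I would include the Beta-integral version, as it is self-contained and avoids invoking the positivity of $\psi'$.
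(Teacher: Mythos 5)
Your first argument is exactly the paper's proof: it shows $\Gamma(t+\alpha)/\Gamma(t)$ is increasing by computing the logarithmic derivative $\psi(t+\alpha)-\psi(t)>0$ from the monotonicity of $\psi$, so the proposal is correct and takes essentially the same route. The Beta-integral variant you sketch is a valid, self-contained alternative, but it is not needed beyond the one-line argument the paper already uses.
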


\begin{proof}
If $\alpha=1/2$, the proof of \eqref{ineq} can be found in \cite{Yaf99}.
To make the paper self-contained we give the proof for any $\alpha>0$.
Since the function $\psi(t)=\Gamma'(t)/\Gamma(t)$ is increasing, we have
\[
\Bigl(\frac{\Gamma(t+\alpha)}{\Gamma(t)}\Bigr)'=\frac{\Gamma(t+\alpha)}{\Gamma(t)}
\Bigl[\frac{\Gamma'(t+\alpha)}{\Gamma(t+\alpha)}-\frac{\Gamma'(t)}{\Gamma(t)}\Bigr]>0
\]
and
$\frac{\Gamma(\tau+\alpha)}{\Gamma(\tau)}>\frac{\Gamma(t+\alpha)}{\Gamma(t)}$.
\end{proof}

In this section and in what follows we use the following

\begin{remark}\label{rem-1}
Let $\cS_{0}(\R_{+})$ be a set of functions $f\in \cS(\R_{+})$ such that
$f^{(n)}(0)=0$ for any $n\in \Z_{+}$. If $f\in \cS_{0}(\R_{+})$, $\alpha\in
\R$, $\beta>0$, then $r^{\alpha}f(r^{\beta})\in \cS_{0}(\R_{+})$ and
$\cS_{0}(\R_{+})$ is dense in $L^{p}(\R_{+},r^{\alpha}\,dr)$. Therefore, when
we assume that $f\in\cS(\R_{+})$, we may additionally assume that $f\in
\cS_{0}(\R_{+})$.
\end{remark}

\section{Boas--Sagher inequalities for general monotone functions}\label{gm-sec}

In this section we study boundedness properties of the $a$-deformed Hankel
transform $H_{\lambda,a}$ of the general monotone functions. For the classical
Hankel transform
\[
H_{\lambda}(f)(\rho)=\frac{1}{2^{\lambda}\Gamma(\lambda+1)}\int_{\R_{+}}f(r)j_{\lambda}(\rho
r)r^{2\lambda+1}\,dr,
\]
similar questions were studied in \cite{CarGorTik13}.

A function $f$ of {locally} bounded variation on $[\varepsilon,\infty)$, for
any $\varepsilon>0$, is {\it general monotone}, written $f\in GM$, if it
vanishes at infinity, and there exist $C>0$ and $c>1$ such that, for every
$r>0$,
\begin{equation}\label{gm-defcond}
\int_{r}^{\infty}|df(r)|\le C\int_{r/c}^{\infty}|f(u)|\,\frac{du}{u},
\end{equation}
where $\int_a^b \,|df(u)|$ is the Riemann--Stieltjes integral.
The GM class strictly includes the
class of monotonic functions.
It was introduced in \cite{LT1} (see also \cite{LT2}).

By Theorem 1.3 from \cite{CarGorTik13} we have that {\itshape if $1<p\le
q<\infty$ and $f\in GM$, then Pitt's inequality
\begin{equation}\label{gm-pittineq}
\bigl\|\rho^{-\gamma}H_{\lambda}(f)(\rho)\bigr\|_{q,d\nu_{\lambda}}\le
c\bigl\|r^{\beta}f(r)\bigr\|_{p,d\nu_{\lambda}}
\end{equation}
holds if and only if
\begin{equation}\label{gm-bgeq}
\beta-\gamma=2(\lambda+1)\Bigl(\frac{1}{p'}-\frac{1}{q}\Bigr)
\end{equation}
and
\[
\Bigl(\frac{1}{2}-\frac{1}{p}\Bigr)(2\lambda+1)-\frac{1}{p}<\beta<\frac{2(\lambda+1)}{p'}.
\]
} It is important to note that the condition on $\beta$ is less restrictive
than the corresponding condition in the general Pitt inequality given by
\eqref{page3-2}.

Moreover, considering general monotone functions allows us to prove the reverse
Pitt inequality. First, we note that if
\begin{equation}\label{gm-intcond}
\int_{0}^{1}r^{2\lambda+1}|f(r)|\,dr+\int_{1}^{\infty}r^{\lambda+1/2}\,|df(r)|<\infty,
\end{equation}
then $H_{\lambda}(f)(\rho)$ is defined as an improper integral (i.e., as
$\lim\limits_{a\to 0,\,b\to\infty} \int_a^b$) and continuous for $\rho>0$
\cite{CarGorTik13}. The reverse Pitt inequality reads as follows: {\itshape Let
$1<q\le p<\infty$ and let a non-negative function $f\in GM$ be such that
condition \eqref{gm-intcond} is satisfied. Then the inequality
\begin{equation}\label{gm-revpittineq}
\bigl\|\rho^{-\gamma}H_{\lambda}(f)(\rho)\bigr\|_{q,d\nu_{\lambda}}\ge
c\bigl\|r^{\beta}f(r)\bigr\|_{p,d\nu_{\lambda}}
\end{equation}
holds provided that
conditions \eqref{gm-bgeq} and
\[
-\frac{2\lambda+1}{p}-\frac{1}{p}<\beta
\]
are satisfied.
}

Noting that
\[
-\frac{2\lambda+1}{p}-\frac{1}{p}<
\Bigl(\frac{1}{2}-\frac{1}{p}\Bigr)(2\lambda+1)-\frac{1}{p}=
\frac{2(\lambda+1)}{p'}-\frac{2\lambda+3}{2},
\]
inequalities \eqref{gm-pittineq} и \eqref{gm-revpittineq} imply that {\itshape if
$1<p<\infty$, $f \in GM$, $f\ge 0$ and \eqref{gm-intcond} holds, then
\begin{equation}\label{two-sided}
c_{1}\bigl\|r^{\beta}f(r)\bigr\|_{p,d\nu_{\lambda}}\le
\bigl\|\rho^{-\gamma}H_{\lambda}(f)(\rho)\bigr\|_{p,d\nu_{\lambda}}\le
c_{2}\bigl\|r^{\beta}f(r)\bigr\|_{p,d\nu_{\lambda}}
\end{equation}
if and only if \eqref{gm-bgeq} and
\[
\frac{2(\lambda+1)}{p'}-\frac{2\lambda+3}{2}<\beta<\frac{2(\lambda+1)}{p'}.
\]
} A study of two-sided inequalities of type \eqref{two-sided} for the classical
Fourier transform has a long history. In the one-dimensional case for monotone
decreasing functions the corresponding conjecture was formulated by Boas
\cite{boas}. He also obtained some partial results. Boas' conjecture was fully
solved by Sagher in \cite{sagher}. The multidimensional case was studied in
\cite{glt}.

We are going to use the above mentioned results to get direct and reverse
Pitt's inequalities for the $a$-deformed Hankel transform of the general
monotone functions. We assume that $4\lambda+a\ge 0$.

\begin{theorem}
Let $1<p\le q<\infty$ and $f\in GM$. Then Pitt's inequality
\[
\bigl\|\rho^{-\gamma}H_{\lambda,a}(f)(\rho)\bigr\|_{q,d\nu_{\lambda,a}}\le
c\bigl\|r^{\beta}f(r)\bigr\|_{p,d\nu_{\lambda,a}}
\]
holds if and only if
\begin{equation}\label{gm-a-bgeq}
\beta-\gamma=(2\lambda+a)\Bigl(\frac{1}{p'}-\frac{1}{q}\Bigr)
\end{equation}
and
\[
\Bigl(\frac{1}{2}-\frac{1}{p}\Bigr)\Bigl(2\lambda+\frac{a}{2}\Bigr)-
\frac{a}{2p}<\beta<\frac{2\lambda+a}{p'}.
\]
\end{theorem}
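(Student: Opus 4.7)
The plan is to reduce the statement to the corresponding result for the classical Hankel transform $H_{2\lambda/a}$ (the displayed inequality \eqref{gm-pittineq} with parameters $\tilde\lambda=2\lambda/a$) by means of the same change of variables already used in the computation leading to \eqref{fgeq}. Concretely, I would set
\[
r=\Bigl(\frac{a}{2}\Bigr)^{1/a}s^{2/a},\qquad \rho=\Bigl(\frac{a}{2}\Bigr)^{1/a}\theta^{2/a},\qquad g(s)=f\Bigl(\Bigl(\frac{a}{2}\Bigr)^{1/a}s^{2/a}\Bigr),
\]
and recall from Section~2 that this substitution turns $H_{\lambda,a}(f)$ into $H_{2\lambda/a}(g)$ and gives the identity
\[
\frac{\bigl\|\rho^{-\gamma}H_{\lambda,a}(f)\bigr\|_{q,d\nu_{\lambda,a}}}{\bigl\|r^{\beta}f\bigr\|_{p,d\nu_{\lambda,a}}}=\Bigl(\frac{a}{2}\Bigr)^{-(\beta+\gamma)/a}\,\frac{\bigl\|\theta^{-2\gamma/a}H_{2\lambda/a}(g)\bigr\|_{q,d\nu_{2\lambda/a}}}{\bigl\|s^{2\beta/a}g\bigr\|_{p,d\nu_{2\lambda/a}}}.
\]
The assumption $4\lambda+a\ge 0$ is equivalent to $2\lambda/a\ge -1/2$, so the classical theory applies.

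The crucial preparatory step is to verify that the $GM$ class is preserved under the map $r\leftrightarrow s$. Since the map $s\mapsto(a/2)^{1/a}s^{2/a}$ is a $C^{1}$ increasing bijection of $\R_{+}$, the Riemann--Stieltjes integral transforms as $\int_{s}^{\infty}|dg|=\int_{r}^{\infty}|df|$. A direct substitution also yields
\[
\int_{s/c}^{\infty}|g(w)|\,\frac{dw}{w}=\frac{a}{2}\int_{r/c^{2/a}}^{\infty}|f(u)|\,\frac{du}{u},
\]
so the defining condition \eqref{gm-defcond} for $g$ at scale $c$ is equivalent to the same condition for $f$ at scale $c^{2/a}>1$. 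Hence $f\in GM$ iff $g\in GM$.

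Having these two ingredients, I would apply the known Pitt inequality for $H_{2\lambda/a}$ on $GM$ functions with $\tilde\lambda=2\lambda/a$, $\tilde\beta=2\beta/a$, $\tilde\gamma=2\gamma/a$. The classical balance condition $\tilde\beta-\tilde\gamma=2(\tilde\lambda+1)(1/p'-1/q)$ becomes precisely \eqref{gm-a-bgeq} after multiplication by $a/2$, and the range
\[
\Bigl(\tfrac12-\tfrac1p\Bigr)(2\tilde\lambda+1)-\tfrac1p<\tilde\beta<\tfrac{2(\tilde\lambda+1)}{p'}
\]
translates, after the same scaling, into the stated range
\[
\Bigl(\tfrac12-\tfrac1p\Bigr)\Bigl(2\lambda+\tfrac{a}{2}\Bigr)-\tfrac{a}{2p}<\beta<\tfrac{2\lambda+a}{p'}.
\]
Both necessity and sufficiency transfer simultaneously because \eqref{fgeq} is an identity of ratios, not just a one-sided estimate.

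The main (mild) obstacle is the preservation of the $GM$ class under the nonlinear change of variables: one has to be careful that the constant $c>1$ in \eqref{gm-defcond} is indeed allowed to be replaced by $c^{2/a}$, and that the Riemann--Stieltjes variation behaves properly under the substitution when $f$ is only assumed of locally bounded variation. Once this technicality is handled, the rest is a transparent rescaling of parameters, and no new analytic input beyond the classical theorem of~\cite{CarGorTik13} is needed.
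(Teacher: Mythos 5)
Your proposal is correct and coincides with the paper's own argument: the paper likewise verifies that the $GM$ class is preserved under a change of variables of the form $r\mapsto \alpha r^{\beta}$ and then transfers the classical $GM$ Pitt inequality \eqref{gm-pittineq} for $H_{2\lambda/a}$ via the ratio identity \eqref{fgeq} and the substitution \eqref{gm-fgeq}, with exactly the parameter rescaling $\tilde\lambda=2\lambda/a$, $\tilde\beta=2\beta/a$, $\tilde\gamma=2\gamma/a$ that you describe. Your additional bookkeeping (the constant $c$ becoming $c^{2/a}$, and the remark that the identity of ratios gives both necessity and sufficiency) is accurate and only makes explicit what the paper leaves implicit.
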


\begin{proof}
First we show that if $f\in GM$, then the function of the type $f(\alpha
r^{\beta})=g(r)$ is also a $GM$ function for any $\alpha,\beta>0$. Indeed,
changing variables $\alpha u^{\beta}=v$ and using inequality \eqref{gm-defcond}
for $f$, we get
\[
\int_{r}^{\infty}|dg(u)|=\int_{(r/\alpha)^{1/\beta}}^{\infty}|df(v)|\le
C\int_{(r/\alpha)^{1/\beta}/c}^{\infty}|f(v)|\,\frac{dv}{v}=
C\beta\int_{r/c}^{\infty}|g(u)|\,\frac{du}{u}.
\]
Now the proof follows from \eqref{fgeq} and the change of variables
\begin{equation}\label{gm-fgeq}
r=\Bigl(\frac{a}{2}\Bigr)^{1/a}s^{2/a},\quad
\rho=\Bigl(\frac{a}{2}\Bigr)^{1/a}\theta^{2/a},\quad
f\Bigl(\Bigl(\frac{a}{2}\Bigr)^{1/a}s^{2/a}\Bigr)=g(s).
\end{equation}
\end{proof}

\begin{theorem}
Let $1<q\le p<\infty$. Assume that $f$ is a non-negative function such that $f
\in GM$ and
\begin{equation}\label{gm-a-intcond}
\int_{0}^{1}r^{2\lambda+a-1}|f(r)|\,dr+\int_{1}^{\infty}r^{\lambda+a/4}\,|df(r)|<\infty,
\end{equation}
is satisfied.
Then the reverse Pitt inequality
\begin{equation}\label{gm-revpittineq+}
\bigl\|\rho^{-\gamma}H_{\lambda,a}(f)(\rho)\bigr\|_{q,d\nu_{\lambda,a}}\ge
c\bigl\|r^{\beta}f(r)\bigr\|_{p,d\nu_{\lambda,a}}
\end{equation}
holds provided that conditions \eqref{gm-a-bgeq} and
\[
-\frac{2\lambda+a}{p}-\frac{a}{2p}<\beta<\infty,
\]
are satisfied.
\end{theorem}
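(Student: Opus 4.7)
The proof strategy mirrors the one used in the preceding theorem: perform the substitution \eqref{gm-fgeq} and invoke the identity \eqref{fgeq} to reduce the statement to the classical reverse Pitt inequality \eqref{gm-revpittineq} for the Hankel transform $H_{2\lambda/a}$ applied to the function $g(s)=f((a/2)^{1/a}s^{2/a})$. Under this substitution, $r^{\beta}f(r)$ is mapped, up to a constant depending on $a$ and $\beta$, to $s^{2\beta/a}g(s)$ measured against $d\nu_{2\lambda/a}$, and $\rho^{-\gamma}H_{\lambda,a}(f)(\rho)$ is likewise mapped to $\theta^{-2\gamma/a}H_{2\lambda/a}(g)(\theta)$; this is exactly the content of \eqref{fgeq}.

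First I would check that $f\ge 0$ and $f\in GM$ translate to the analogous properties of $g$: non-negativity is immediate, and the $GM$ property was already verified in the proof of the direct inequality above by rewriting \eqref{gm-defcond} in the new variable. Second, I would verify that the integrability assumption \eqref{gm-a-intcond} becomes precisely the classical assumption \eqref{gm-intcond} for $g$ with index $2\lambda/a$: applying $r=(a/2)^{1/a}s^{2/a}$ inside the two integrals of \eqref{gm-a-intcond} and computing the Jacobian shows that the exponents $2\lambda+a-1$ near the origin and $\lambda+a/4$ at infinity have been chosen precisely so as to produce the classical exponents $2(2\lambda/a)+1$ and $(2\lambda/a)+1/2$ for $g$, so that $H_{2\lambda/a}(g)(\theta)$ is a well-defined continuous function of $\theta>0$.

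Third, the scaling relations \eqref{gm-a-bgeq} and the lower bound on $\beta$ must be matched to the corresponding classical conditions. Multiplying \eqref{gm-a-bgeq} by $2/a$ yields the classical balance \eqref{gm-bgeq} with parameters $(2\beta/a,2\gamma/a,2\lambda/a)$, and the stated lower bound on $\beta$ translates, after the same scaling, into the classical lower bound $-(2(2\lambda/a)+1)/p-1/p$ on $2\beta/a$. With these verifications in place, \eqref{gm-revpittineq} applied to $g$ delivers the desired reverse inequality, which, transferred back through \eqref{fgeq}, is exactly \eqref{gm-revpittineq+}.

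The analytic content of the argument — the reverse Pitt inequality for the classical Hankel transform — is used as a black box, so no genuinely hard estimate is required. The only potential obstacle is the bookkeeping: tracking the Jacobian constants, the exponent shifts under $r\mapsto s$, and the precise translation of the integrability hypothesis so that every hypothesis of \eqref{gm-revpittineq} is fulfilled for $g$ with index $2\lambda/a$ and exponents $2\beta/a,2\gamma/a$.
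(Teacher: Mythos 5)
Your proposal follows exactly the route of the paper's own (very short) proof: the change of variables \eqref{gm-fgeq} together with \eqref{fgeq} reduces everything to the classical reverse Pitt inequality \eqref{gm-revpittineq} for $H_{2\lambda/a}$, after checking that the $GM$ property survives the substitution (done in the preceding proof) and that \eqref{gm-a-intcond} turns into \eqref{gm-intcond} for $g$ with parameter $2\lambda/a$; your verification of the two integrals in \eqref{gm-a-intcond} is correct, as is the translation of \eqref{gm-a-bgeq} into \eqref{gm-bgeq}.

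There is, however, one concrete slip in your third step. You claim that the stated lower bound on $\beta$ scales into the classical bound $-\bigl(2(2\lambda/a)+1\bigr)/p-1/p$ for $2\beta/a$. It does not: multiplying $\beta>-\frac{2\lambda+a}{p}-\frac{a}{2p}$ by $2/a$ gives $\frac{2\beta}{a}>-\frac{2(2\lambda/a)+2}{p}-\frac{1}{p}$, which is weaker (by $1/p$) than the condition $-\frac{2\lambda'+1}{p}-\frac1p<\beta'$ quoted in Section~\ref{gm-sec} for \eqref{gm-revpittineq}. Equivalently, the classical statement as quoted only yields the theorem for $\beta>-\frac{2\lambda+a}{p}$, not on the full stated range. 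This mismatch is not of your making: the theorem's range $\beta>-\frac{2\lambda+a}{p}-\frac{a}{2p}$ is the exact rescaling of the classical condition $\beta>-\frac{2(\lambda+1)}{p}-\frac1p$, so one of the two statements in the paper carries a typo, and the paper's own one-line proof glosses over the same point. To make your argument airtight you should either invoke the reverse Pitt inequality of \cite{CarGorTik13} in the precise form with lower bound $-\frac{2(\lambda+1)}{p}-\frac1p$, or else state your conclusion for $\beta>-\frac{2\lambda+a}{p}$; asserting that the two ranges match exactly, as written, is false.
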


The proof follows from \eqref{fgeq} and \eqref{gm-fgeq}. Note that condition
\eqref{gm-a-intcond} implies condition \eqref{gm-intcond} for the function $g$
given by \eqref{gm-fgeq}. In particular, condition \eqref{gm-a-intcond} yields
that $H_{\lambda,a}$ is defined as an improper integral and
$H_{\lambda,a}(f)\in C(0,\infty)$; see \cite[Lemma 3.1]{CarGorTik13}.

Since
\[
-\frac{2\lambda+a}{p}-\frac{a}{2p}<
\Bigl(\frac{1}{2}-\frac{1}{p}\Bigr)\Bigl(2\lambda+\frac{a}{2}\Bigr)-
\frac{a}{2p}=\frac{2\lambda+a}{p'}-\frac{4\lambda+3a}{4},
\]
we obtain the following Boas--Sagher type equivalence.

\begin{corollary}
Suppose that $1<p<\infty$, $f \in GM$, $f\ge 0$ and \eqref{gm-a-intcond} holds.
Then
\[
c_{1}\bigl\|r^{\beta}f(r)\bigr\|_{p,d\nu_{\lambda,a}}\le
\bigl\|\rho^{-\gamma}H_{\lambda,a}(f)(\rho)\bigr\|_{p,d\nu_{\lambda,a}}\le
c_{2}\bigl\|r^{\beta}f(r)\bigr\|_{p,d\nu_{\lambda,a}}
\]
if and only if conditions
\eqref{gm-a-bgeq} and
\begin{equation}\label{gm-a-intcond-}
\frac{2\lambda+a}{p'}-\frac{4\lambda+3a}{4}<\beta<\frac{2\lambda+a}{p'}
\end{equation}
hold.
\end{corollary}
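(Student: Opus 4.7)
The plan is to obtain this corollary as a direct consequence of the two preceding theorems (the direct Pitt inequality and the reverse Pitt inequality for the $a$-deformed Hankel transform of general monotone functions), applied with $p=q$.

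First I would set $q=p$ in the direct Pitt inequality theorem: this gives the upper bound
\[
\bigl\|\rho^{-\gamma}H_{\lambda,a}(f)(\rho)\bigr\|_{p,d\nu_{\lambda,a}}\le
c_{2}\bigl\|r^{\beta}f(r)\bigr\|_{p,d\nu_{\lambda,a}},
\]
under the conditions $\beta-\gamma=(2\lambda+a)(1/p'-1/p)$ and
\[
\Bigl(\tfrac{1}{2}-\tfrac{1}{p}\Bigr)\Bigl(2\lambda+\tfrac{a}{2}\Bigr)-\tfrac{a}{2p}<\beta<\tfrac{2\lambda+a}{p'}.
\]
Next, I would apply the reverse Pitt inequality theorem (valid for $1<q\le p<\infty$) with $q=p$. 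Since $f\ge 0$, $f\in GM$ and \eqref{gm-a-intcond} holds, this yields
\[
\bigl\|\rho^{-\gamma}H_{\lambda,a}(f)(\rho)\bigr\|_{p,d\nu_{\lambda,a}}\ge
c_{1}\bigl\|r^{\beta}f(r)\bigr\|_{p,d\nu_{\lambda,a}},
\]
under the same relation $\beta-\gamma=(2\lambda+a)(1/p'-1/p)$ and the less restrictive condition $-\tfrac{2\lambda+a}{p}-\tfrac{a}{2p}<\beta$.

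It remains to intersect the admissible ranges of $\beta$. The inequality displayed just before the statement of the corollary, namely
\[
-\tfrac{2\lambda+a}{p}-\tfrac{a}{2p}<\Bigl(\tfrac{1}{2}-\tfrac{1}{p}\Bigr)\Bigl(2\lambda+\tfrac{a}{2}\Bigr)-\tfrac{a}{2p}=\tfrac{2\lambda+a}{p'}-\tfrac{4\lambda+3a}{4},
\]
shows that the lower bound from the direct inequality is binding, so the common admissible range reduces exactly to \eqref{gm-a-intcond-}. Conversely, the necessity of \eqref{gm-a-bgeq} and of the lower bound in \eqref{gm-a-intcond-} follows from the necessity part of the direct Pitt inequality theorem, while the upper bound in \eqref{gm-a-intcond-} is forced in the same way, so the stated range is both necessary and sufficient.

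There is no real obstacle here: the only nontrivial point is the algebraic identification of $(\tfrac{1}{2}-\tfrac{1}{p})(2\lambda+\tfrac{a}{2})-\tfrac{a}{2p}$ with $\tfrac{2\lambda+a}{p'}-\tfrac{4\lambda+3a}{4}$, which is a direct computation (both sides equal $\lambda+\tfrac{a}{4}-\tfrac{2\lambda+a}{p}$); everything else is bookkeeping of the hypotheses from the two theorems.
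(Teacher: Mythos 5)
Your proposal is correct and follows essentially the same route as the paper: combine the direct and reverse GM Pitt inequalities for $H_{\lambda,a}$ with $q=p$, and use the algebraic identity $\bigl(\tfrac{1}{2}-\tfrac{1}{p}\bigr)\bigl(2\lambda+\tfrac{a}{2}\bigr)-\tfrac{a}{2p}=\tfrac{2\lambda+a}{p'}-\tfrac{4\lambda+3a}{4}$ to see that the direct inequality's range is the binding one, with necessity coming from the ``only if'' part of the direct theorem. No substantive differences from the paper's argument.
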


\begin{remark}
We note that condition \eqref{gm-a-intcond} always holds if
$\bigl\|r^{\beta}f(r)\bigr\|_{p,d\nu_{\lambda,a}}<\infty$ and $\beta$ satisfies
\eqref{gm-a-intcond-}. Indeed, it is easy to check (see, e.g.,
\cite[p.~111]{glt}) that any general monotone function satisfies the following
property: there is $c>1$ such that
\[
\int_{r}^{\infty}u^{\sigma}|df(u)|\le C \int_{r/c}^{\infty}u^{\sigma-1}|f(u)|\,du, \qquad
\sigma\ge 0.
\]
Then using this and H\"older's inequality, we get for $w(r)=\begin{cases}
r^{2\lambda+a-1}, & r<1, \\ r^{\lambda+a/4-1}, & r\ge 1,
\end{cases}$ that
\begin{align*}
\int_{0}^{1}r^{2\lambda+a-1}|f(r)|\,dr+\int_{1}^{\infty}r^{\lambda+a/4}\,|df(r)|
&\le C\int_{0}^{\infty}w(r)|f(r)|\,dr
\\
&\le C
\bigl\|r^{\beta}f(r)\bigr\|_{p,d\nu_{\lambda,a}}
I,
\end{align*}
where
\[
I^{p'}=\int_{0}^{\infty} r^{(-\beta -\frac{2\lambda+a-1}{p})p'} w^{p'}(r)\,dr.
\]
The latter is bounded under condition \eqref{gm-a-intcond-}.
\end{remark}

\section{Pitt's inequality for $\cF_{k,a}$ transform in $L^{2}$}\label{pitt-sec}

Recall that $R\subset \R^{d}$ is a root system, $R_{+}$ is the positive
subsystem of $R$, and $k\colon R\to \R_{+}$ is a multiplicity function with the
property that $k$ is $G$-invariant. Here~$G$ is a finite reflection group
generated by reflections $\{\sigma_{a}\colon a\in R\}$, where $\sigma_{a}$ is a
reflection with respect to a hyperplane $\<a,x\>=0$.

C.~F.~Dunkl introduced a family of first-order differential-difference operators
(Dunkl's operators) associated with $G$ and $k$ by
\[
D_{j}f(x)=\frac{\partial f(x)}{\partial x_{j}}+ \sum_{a\in
R_{+}}k(a)\<a,e_{j}\>\,\frac{f(x)-f(\sigma_{a}x)}{\<a,x\>},\quad j=1,\ldots,d.
\]
The Dunkl kernel $e_{k}(x, y)=E_{k}(x, iy)$ is the unique solution of
the joint eigenvalue problem for the corresponding Dunkl operators:
\[
D_{j}f(x)=iy_{j}f(x),\quad j=1,\ldots,d,\qquad f(0)=1.
\]
The Dunkl transform is given by
\[
\cF_{k}(f)(y)=\int_{\R^{d}}f(x)\overline{e_{k}(x,
y)}\,d\mu_{k}(x).
\]
By $\mathbb{S}^{d-1}$ denote the unit sphere in $\R^{d}$. Let $x'\in
\mathbb{S}^{d-1}$ and $dx'$ be the Lebesgue measure on the sphere. Let
\[
a_{k}^{-1}=\int_{\mathbb{S}^{d-1}}v_{k}(x')\,dx',\qquad
d\omega_{k}(x')=a_{k}v_{k}(x')\,dx',
\]
and
\[
\|f\|_{2,d\omega_{k}}=\biggl(\int_{\mathbb{S}^{d-1}}|f(x')|^{2}\,d\omega_{k}(x')\biggr)^{1/2}.
\]
For $\lambda_{k}=d/2-1+\<k\>$, we have
\begin{equation}\label{norm-const}
c_{k,a}^{-1}=
\int_{0}^{\infty}e^{-r^{a}/a}r^{2\lambda_k+a-1}\,dr\int_{\mathbb{S}^{d-1}}v_{k}(x')\,dx'=
b_{\lambda_{k},a}^{-1}a_{k}^{-1}.
\end{equation}

Let us denote by $\cH_{n}^{d}(v_{k})$ the subspace of $k$-spherical harmonics
of degree $n\in \Z_{+}$ in $L^{2} (\mathbb{S}^{d-1},d\omega_{k})$ (see
\cite[Chap.~5]{DunXu01}). Let $\cP_{n}^{d}$ be the space of homogeneous
polynomials of degree $n$ in $\R^{d}$. Then $\cH_{n}^{d}(v_{k})$ is the
restriction of $\ker \Delta_{k}\cap \cP_{n}^{d}$ to the sphere
$\mathbb{S}^{d-1}$.

If $l_{n}$ is the dimension of $\cH_{n}^{d}(v_{k})$, we denote by
$\{Y_{n}^{j}\colon j=1,\ldots,l_{n}\}$ the real-valued orthonormal basis
$\cH_{n}^{d}(v_{k})$ in $L^{2}(\mathbb{S}^{d-1},d\omega_{k})$. A union of these
bases forms an orthonormal basis in $L^{2}(\mathbb{S}^{d-1},d\omega_{k})$,
which consists of $k$-spherical harmonics, i.e., we have
\begin{equation}\label{expansion2}
L^{2}(\mathbb{S}^{d-1},d\omega_{k})=\sum_{n=0}^{\infty}\oplus
\cH_{n}^{d}(v_{k}).
\end{equation}
For $\lambda>-1$, we denote the Laguerre polynomials by
\[
L_s^{(\lambda)}(t)=\sum_{j=0}^{s}\frac{(-1)^j\Gamma(\lambda+s+1)}{(s-j)!\,\Gamma(\lambda+j+1)}\,\frac{t^{j}}{j!}.
\]
Set $\lambda_{k,a,n}=2(\lambda_k+n)/a$.
In \cite{BenKobOrs12}, the authors
constructed an orthonormal basis in $L^{2}(\R^{d},d\mu_{k,a})$
\begin{equation*}
\begin{aligned}
\Phi_{n,j,s}^{a}(x)&=\gamma_{n,j,s}^{a}
Y_{n}^{j}(x)L_s^{(\lambda_{k,a,n})}\Bigl(\frac{2}{a}\,|x|^a\Bigr)e^{-|x|^{a}/a},
\\
& \gamma_{n,j,s}^{a}>0,\quad n,s\in\Z_+,\quad j=1,\cdots, l_n,
\end{aligned}
\end{equation*}
which consists of the eigenvalues of the operator $\Delta_{k,a}=|x|^{2-a}\Delta_{k}-|x|^{a}$,
$a>0$. This helps to define
two-parameter unitary operator $\cF_{k,a}$ given by \eqref{gen-dun-trans}.

Note that the system $\{\Phi_{n,j,s}^{a}(x)\}$ is the eigensystem of $\cF_{k,a}$, i.e.,
\[
\cF_{k,a}(\Phi_{n,j,s}^{a})(y)=e^{-i \pi (s+n/a)}\Phi_{n,j,s}^{a}(y).
\]
This and \eqref{expansion2} imply the decomposition of
$L^{2}(\R^{d},d\mu_{k,a})$ given by \eqref{expansion}.

To prove Pitt's inequality, we use the following Bochner-type identity
\cite{BenKobOrs12} for functions of the type $f(x)=Y_{n}^{i}(x')\psi (r)\in
\cS(\R^{d})$, $x=rx'$:
\begin{equation}\label{boch-ident}
\begin{aligned}[t]
\cF_{k,a}(f)(y)&=e^{-i\pi n/a}\rho^nY_n^j(y')\int_{\R_{+}}\psi(r)r^{-n}j_{2(\lambda_k+n)/a}\Bigl(\frac{2}{a}\,(\rho
r)^{a/2}\Bigr)\,d\nu_{\lambda_k+n,a}(r)\\
&=e^{-i\pi n/a}\rho^nY_n^j(y')H_{\lambda_k+n,a}(\psi(r)r^{-n})(\rho),\qquad y=\rho y'.
\end{aligned}
\end{equation}
We are now in a position to prove inequality \eqref{pitt-ineq}.

\begin{theorem}\label{thm-3}
Let $\lambda_{k}=d/2-1+\<k\>$, $a>0$, $4\lambda_{k}+a\ge 0$, $0\le \beta<\lambda_{k}+a/2$.
For any $f\in \cS(\R^{d})$, the Pitt inequality
\[
\bigl\||y|^{-\beta}\cF_{k,a}(f)(y)\bigr\|_{2,d\mu_{k,a}}\le
C(\beta,k,a)\bigl\||x|^{\beta}f(x)\bigr\|_{2,d\mu_{k,a}}
\]
holds with the sharp constant
\[
C(\beta,k,a)=a^{-2\beta/a}\,
\frac{\Gamma\bigl(a^{-1}(\lambda_{k}+a/2-\beta)\bigr)}
{\Gamma\bigl(a^{-1}(\lambda_{k}+a/2+\beta)\bigr)}.
\]
\end{theorem}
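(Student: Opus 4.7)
The plan is to use the orthogonal decomposition \eqref{expansion}, on each summand of which $\cF_{k,a}$ acts invariantly, and reduce the claim to the sharp Hankel inequality of Theorem~\ref{thm-2} combined with the monotonicity given by Lemma~\ref{lem-1}. By Remark~\ref{rem-1} we may work with $f\in\cS_0(\R^d)$ so that every integral in sight converges absolutely. Expanding such $f$ in the $k$-spherical harmonic basis from \eqref{expansion2}, write $f=\sum_{n,j}Y_n^j(x')\psi_{n,j}(r)$; since the subspaces $\cR_n^d(v_{k,a})=\cR_0^d(v_{k,a})\otimes\cH_n^d(v_k)$ are mutually orthogonal and invariant under $\cF_{k,a}$, it suffices to establish the inequality for each single term $f=Y_n^j(x')\psi(r)$ with a constant independent of $n,j$ and then sum the squared norms.

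On a single block, I would first check that the product measure factorizes: using $v_k(rx')=r^{2\<k\>}v_k(x')$ and the relation $c_{k,a}^{-1}=b_{\lambda_k,a}^{-1}a_k^{-1}$ from \eqref{norm-const}, one gets $d\mu_{k,a}(x)=d\nu_{\lambda_k,a}(r)\,d\omega_k(x')$ in polar coordinates. Orthonormality of $Y_n^j$ on $(\mathbb{S}^{d-1},d\omega_k)$ then gives
\[
\bigl\||x|^{\beta}f\bigr\|_{2,d\mu_{k,a}}^{2}
=b_{\lambda_k,a}\int_0^{\infty}r^{2\beta}|\psi(r)|^{2}r^{2\lambda_k+a-1}\,dr.
\]
Plugging the Bochner-type identity \eqref{boch-ident} into the transform side and repeating the same computation yields
\[
\bigl\||y|^{-\beta}\cF_{k,a}(f)\bigr\|_{2,d\mu_{k,a}}^{2}
=b_{\lambda_k,a}\int_0^{\infty}\rho^{-2\beta+2n}\bigl|H_{\lambda_k+n,a}(\psi(r)r^{-n})(\rho)\bigr|^{2}\rho^{2\lambda_k+a-1}\,d\rho.
\]
The key bookkeeping step is to re-express both displays as weighted $L^2$ norms against the measure $d\nu_{\lambda_k+n,a}$; the $r^{-n}$ factor in $\psi(r)r^{-n}$ exactly absorbs the shift $2\lambda_k+a-1\to 2(\lambda_k+n)+a-1$ in the weight, and the constant $b_{\lambda_k,a}/b_{\lambda_k+n,a}$ cancels between numerator and denominator.

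What survives is precisely the Hankel ratio
\[
\frac{\bigl\|\rho^{-\beta}H_{\lambda_k+n,a}(\psi r^{-n})\bigr\|_{2,d\nu_{\lambda_k+n,a}}}
{\bigl\|r^{\beta}(\psi r^{-n})\bigr\|_{2,d\nu_{\lambda_k+n,a}}}.
\]
Since $0\le\beta<\lambda_k+a/2\le\lambda_k+n+a/2$ and $4(\lambda_k+n)+a\ge 4\lambda_k+a\ge 0$, Theorem~\ref{thm-2} applies with parameter $\lambda_k+n$ in place of $\lambda$ and bounds this ratio by $c(\beta,\lambda_k+n,a)$. Now Lemma~\ref{lem-1}, applied with $\alpha=2\beta/a>0$ to the gamma ratio defining $c(\beta,\lambda,a)$, gives $c(\beta,\lambda_k+n,a)\le c(\beta,\lambda_k,a)=C(\beta,k,a)$ for every $n\in\Z_+$. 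Summing the squared norms over $n,j$ with this uniform bound proves the inequality.

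For sharpness, restrict to the radial block $n=0$: there the above reduction is an equality of ratios, not merely an inequality, so any near-extremizer $\psi$ for Theorem~\ref{thm-2} with $\lambda=\lambda_k$ produces a near-extremizer for the $\cF_{k,a}$ inequality with the same constant $C(\beta,k,a)$. The only delicate point I foresee is the factorization and rescaling of measures, which is routine but must be done carefully to see that the $n$-dependent normalization constants drop out; once that is in place, the monotonicity provided by Lemma~\ref{lem-1} is what makes a single sharp constant work across all harmonic levels.
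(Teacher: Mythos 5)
Your proposal is correct and follows essentially the same route as the paper: spherical-harmonic decomposition of $f$, the Bochner-type identity \eqref{boch-ident} to reduce each block to the $a$-deformed Hankel transform at level $\lambda_k+n$, the sharp radial inequality of Theorem~\ref{thm-2}, and the monotonicity of $c(\beta,\lambda,a)$ in $\lambda$ from Lemma~\ref{lem-1} (your assignment $\alpha=2\beta/a$ in the lemma works just as well as the paper's implicit choice), with sharpness coming from the radial block $n=0$.
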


\begin{proof}
For $\beta=0$ we have $C(\beta,k,a)=1$ and Pitt's inequality \eqref{pitt-ineq}
becomes Plancherel's identity \eqref{planch-eq}. The rest of the proof follows
\cite{GorIvaTik15}. Let $0<\beta<\lambda_{k}+a/2$. If $f\in
\mathcal{S}(\R^{d})$, then by \eqref{expansion2}
\[
f_{nj}(r)=\int_{\mathbb{S}^{d-1}}f(rx')Y_{n}^{j}(x')\,d\omega_{k}(x')\in \mathcal{S}(\R_{+}),
\]
\[
f(rx')=\sum_{n=0}^{\infty}\sum_{j=1}^{l_{n}}f_{nj}(r)Y_{n}^{j}(x'),
\]
\[
\int_{\mathbb{S}^{d-1}}|f(rx')|^{2}\,d\omega_{k}(x')=\sum_{n=0}^{\infty}\sum_{j=1}^{l_{n}}|f_{nj}(r)|^{2}.
\]
Using spherical coordinates, decomposition of $L^{2}(\R^{d},d\mu_{k,a})$
\eqref{expansion}, formulas \eqref{norm-const} and \eqref{boch-ident}, we get that
\begin{equation}\label{eq3.5}
\begin{aligned}[t]
\int_{\R^{d}}|x|^{2\beta}|f(x)|^{2}\,d\mu_{k,a}(x)
&=b_{\lambda_{k},a}\int_{0}^{\infty}r^{2\beta+2\lambda_k+a-1}
\int_{\mathbb{S}^{d-1}}|f(rx')|^{2}\,d\omega_{k}(x')\,dr
\\
&=b_{\lambda_{k},a}\int_{0}^{\infty}r^{2\beta+2\lambda_k+a-1}
\sum_{n=0}^{\infty}\sum_{j=1}^{l_{n}}|f_{nj}(r)|^{2}\,dr
\\
&=\sum_{n=0}^{\infty}\sum_{j=1}^{l_{n}}\int_{0}^{\infty}|f_{nj}(r)|^{2}r^{2\beta}\,d\nu_{\lambda_{k},a}(r),
\end{aligned}
\end{equation}
\[
\cF_{k,a}(f)(y)=
\sum_{n=0}^{\infty}\sum_{j=1}^{l_{n}}e^{-i\pi n/a}\rho^nY_n^j(y')H_{\lambda_k+n,a}(f_{nj}(r)r^{-n})(\rho),
\]
and
\begin{multline}\label{eq3.6}
\int_{\R^{d}}|y|^{-2\beta}|\cF_{k,a}(f)(y)|^{2}\,d\mu_{k,a}(y)
\\
\le\sum_{n=0}^{\infty}\sum_{j=1}^{l_{n}}\int_{0}^{\infty}\left|H_{\lambda_k+n,a}(f_{nj}(r)r^{-n})
(\rho)\right|^{2}\rho^{-2\beta+2n}\,d\nu_{\lambda_{k},a}(\rho).
\end{multline}
By Theorem~\ref{thm-2} with $n\in \Z_+$ and $0\le \beta<\lambda_{k}+n+a/2$, we have
\begin{multline}\label{eq3.7}
\int_{0}^{\infty}\left|H_{\lambda_k+n,a}(f_{nj}(r)r^{-n})
(\rho)\right|^{2}\rho^{-2\beta+2n}\,d\nu_{\lambda_{k},a}(\rho)
\\
\le c^{2}(\beta,\lambda_{k}+n,a)\int_{0}^{\infty}\left|f_{nj}(r)r^{-n}\right|^{2}r^{2\beta+2n}\,d\nu_{\lambda_{k},a}(r)
\\
=c^{2}(\beta,\lambda_{k}+n,a)\int_{0}^{\infty}\left|f_{nj}(r)\right|^{2}r^{2\beta}\,d\nu_{\lambda_{k},a}(r).
\end{multline}
Since $c(\beta,\lambda_{k}+n,a)$ is decreasing with $n$ (see Lemma~\ref{lem-1}), then
using \eqref{eq3.5}, \eqref{eq3.6}, and \eqref{eq3.7}, we arrive at
\begin{multline}\label{eq3.8}
\int_{\R^{d}}|y|^{-2\beta}|\cF_{k,a}(f)(y)|^{2}\,d\mu_{k,a}(y)\\
\le \sum_{n=0}^{\infty}\sum_{j=1}^{l_{n}}c^{2}(\beta,\lambda_{k}+n,a)\int_{0}^{\infty}|f_{nj}(r)|^{2}r^{2\beta}\,d\nu_{\lambda_{k},a}(r)\\
\le c^{2}(\beta,\lambda_{k},a)\sum_{n=0}^{\infty}\sum_{j=1}^{l_{n}}\int_{0}^{\infty}|f_{nj}(r)|^{2}r^{2\beta}\,d\nu_{\lambda_{k},a}(r)\\
=c^{2}(\beta,\lambda_{k},a)
\int_{\R^{d}}|x|^{2\beta}|f(x)|^{2}\,d\mu_{k,a}(x).
\end{multline}
\end{proof}

In the proof of Theorem~\ref{thm-3} we obtained the following result.

\begin{corollary}\label{c2}
Let $n\in \mathbb{N}$, $\lambda_{k}= d/2-1+\<k\>$, and
$0\le\beta<\lambda_{k}+a/2+n$. Then Pitt's inequality for the transform
$\cF_{k,a}$ holds for $f\in \mathcal{S}(\R^{d})\cap
\mathcal{R}_{n}^{d}(v_{k,a})$ with sharp constant $c(\beta,\lambda_{k}+n,a)$.
\end{corollary}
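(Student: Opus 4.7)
The plan is to extract this corollary directly from the proof of Theorem~\ref{thm-3}, noting that when $f$ is restricted to the single summand $\cR_n^d(v_{k,a})$ of the decomposition \eqref{expansion}, only the $n$-th spherical harmonic layer contributes and there is no loss in applying Lemma~\ref{lem-1}. More precisely, if $f\in\cS(\R^d)\cap \cR_n^d(v_{k,a})$, then in the expansion
\[
f(rx')=\sum_{m=0}^{\infty}\sum_{j=1}^{l_{m}}f_{mj}(r)Y_{m}^{j}(x')
\]
the coefficients $f_{mj}$ vanish for $m\neq n$. Hence identities \eqref{eq3.5} and \eqref{eq3.6} collapse to sums over $j=1,\dots,l_n$ only, with the radial factor $f_{nj}$.

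Next I would apply the Bochner-type identity \eqref{boch-ident} componentwise, obtaining
\[
\cF_{k,a}(f)(y)=e^{-i\pi n/a}\rho^{n}\sum_{j=1}^{l_{n}}Y_{n}^{j}(y')H_{\lambda_{k}+n,a}(f_{nj}(r)r^{-n})(\rho),
\]
and then invoke Theorem~\ref{thm-2} with parameter $\lambda_{k}+n$ in place of $\lambda$, which is legitimate precisely when $0\le\beta<\lambda_{k}+n+a/2$ and $4(\lambda_k+n)+a\ge 0$ (the latter holds since $4\lambda_k+a\ge 0$ and $n\ge 0$). Applied to the function $f_{nj}(r)r^{-n}\in \cS_0(\R_+)$ (using Remark~\ref{rem-1}) with weight $r^{2\beta+2n}\,d\nu_{\lambda_k,a}(r)=r^{2\beta}\,d\nu_{\lambda_k+n,a}(r)\cdot(b_{\lambda_k,a}/b_{\lambda_k+n,a})$, this yields exactly the required inequality with constant $c(\beta,\lambda_k+n,a)^{2}$ on each layer. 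Summing over $j=1,\dots,l_n$ and combining with \eqref{eq3.5}--\eqref{eq3.6} proves the Pitt inequality with constant $c(\beta,\lambda_k+n,a)$.

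The only remaining point is sharpness, and this is the step requiring the most care. Here the key observation is that Theorem~\ref{thm-2} is sharp on $\cS(\R_+)$, so one can take an extremizing sequence (or its $\cS_0(\R_+)$ approximation via Remark~\ref{rem-1}) $\psi_m\in \cS(\R_+)$ for the Hankel-type inequality with parameter $\lambda_k+n$, and lift it to $\R^d$ by setting $f_m(x)=Y_n^j(x')\,\psi_m(r)\,r^{n}$ for a fixed nonzero $Y_n^j\in\cH_n^d(v_k)$. Then $f_m\in\cS(\R^d)\cap\cR_n^d(v_{k,a})$, and the identities above become equalities (only one $j$-term is present and no majorization by Lemma~\ref{lem-1} is used), so the ratio of the two sides of Pitt's inequality converges to $c(\beta,\lambda_k+n,a)$. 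This shows sharpness and completes the proof; the main potential obstacle — verifying that the extremizing radial functions lift to genuine Schwartz functions on $\R^d$ with the correct behaviour at the origin — is handled by the density statement in Remark~\ref{rem-1}, since $\psi_m(r)\,r^n\in \cS_0(\R_+)$ ensures $f_m\in \cS(\R^d)$.
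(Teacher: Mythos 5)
Your proposal is correct and follows essentially the paper's own route: the authors obtain this corollary by observing that, for $f\in\cS(\R^d)\cap\cR_n^d(v_{k,a})$, the chain \eqref{eq3.5}--\eqref{eq3.7} in the proof of Theorem~\ref{thm-3} involves only the $n$-th harmonic layer, so Theorem~\ref{thm-2} with parameter $\lambda_k+n$ applies directly (in the wider range $0\le\beta<\lambda_k+n+a/2$) and no majorization via Lemma~\ref{lem-1} occurs. Your explicit sharpness step --- lifting near-extremizers $\psi_m\in\cS_0(\R_+)$ of Theorem~\ref{thm-2} to $f_m(x)=Y_n^j(x')\psi_m(r)r^n$, justified by Remark~\ref{rem-1} --- is exactly the detail the paper leaves implicit, and it is handled correctly.
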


\section{Logarithmic uncertainty principle for $\cF_{k,a}$ transform}\label{unc-princ-sec}

\begin{theorem}\label{thm-4}
Suppose that $a>0$, $\lambda_{k}= d/2-1+\<k\>$, and $4\lambda_{k}+a\ge 0$. Then the inequality
\begin{multline*}
\int_{\R^{d}}\ln{}(|x|)|f(x)|^{2}\,d\mu_{k,a}(x)+
\int_{\R^{d}}\ln{}(|y|)|\cF_{k,a}(f)(y)|^{2}\,d\mu_{k,a}(y)\\
{}\ge
\frac{2}{a}\,\Bigl(
\psi\Bigl(\frac{\lambda_{k}}{a}+\frac{1}{2}\Bigr)+\ln a\Bigr)
\|f\|_{2,d\mu_{k,a}}^{2}
\end{multline*}
holds for any
$f\in \mathcal{S}(\R^{d})$.
\end{theorem}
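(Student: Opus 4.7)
The plan is to deduce the logarithmic uncertainty principle from the sharp Pitt inequality of Theorem~\ref{thm-3} by the classical Beckner trick: differentiate the Pitt inequality in the power $\beta$ at $\beta=0$, where Plancherel's formula makes both sides coincide.

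First I would introduce, for $f\in\cS(\R^{d})$ and $0\le\beta<\lambda_{k}+a/2$, the functionals
\[
A(\beta)=\int_{\R^{d}}|x|^{2\beta}|f(x)|^{2}\,d\mu_{k,a}(x),\qquad
B(\beta)=\int_{\R^{d}}|y|^{-2\beta}|\cF_{k,a}(f)(y)|^{2}\,d\mu_{k,a}(y),
\]
and set $F(\beta)=C(\beta,k,a)^{2}A(\beta)-B(\beta)$. By Theorem~\ref{thm-3} we have $F(\beta)\ge 0$ for $0\le\beta<\lambda_{k}+a/2$, while at $\beta=0$ the Plancherel formula \eqref{planch-eq} and $C(0,k,a)=1$ yield $A(0)=B(0)=\|f\|_{2,d\mu_{k,a}}^{2}$ and $F(0)=0$. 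Hence $F$ attains its minimum at $\beta=0$, so that the right derivative satisfies $F'(0^{+})\ge 0$, which will be the desired inequality once computed explicitly.

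Next I would compute the three derivatives. For $A'(0)$ and $B'(0)$, differentiation under the integral sign gives
\[
A'(0)=2\int_{\R^{d}}\ln(|x|)\,|f(x)|^{2}\,d\mu_{k,a}(x),\qquad
B'(0)=-2\int_{\R^{d}}\ln(|y|)\,|\cF_{k,a}(f)(y)|^{2}\,d\mu_{k,a}(y).
\]
This is legitimate because $f\in\cS(\R^{d})$ implies Schwartz-like decay and smoothness for both $f$ and $\cF_{k,a}(f)$ (the generalized Fourier transform preserves the Schwartz class, see \cite{BenKobOrs12}), while the condition $4\lambda_{k}+a\ge 0$ ensures integrability of $\ln|x|$ against $|x|^{2\lambda_{k}+a-1}\,dx$ near the origin; a standard dominated convergence argument then applies. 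For the sharp constant, logarithmic differentiation of
\[
\ln C(\beta,k,a)=-\frac{2\beta}{a}\,\ln a+\ln\Gamma\Bigl(\frac{\lambda_{k}+a/2-\beta}{a}\Bigr)-\ln\Gamma\Bigl(\frac{\lambda_{k}+a/2+\beta}{a}\Bigr)
\]
gives, using $C(0,k,a)=1$,
\[
\bigl(C^{2}\bigr)'(0)=2C'(0)=-\frac{4}{a}\Bigl(\psi\Bigl(\frac{\lambda_{k}}{a}+\frac{1}{2}\Bigr)+\ln a\Bigr).
\]

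Finally I would assemble the three pieces:
\[
F'(0)=\bigl(C^{2}\bigr)'(0)\,\|f\|_{2,d\mu_{k,a}}^{2}+A'(0)-B'(0)\ge 0,
\]
which, after substituting the computed derivatives and dividing by $2$, yields precisely the claimed logarithmic uncertainty inequality. The main technical obstacle is the justification of the termwise differentiation of $A(\beta)$ and $B(\beta)$ at $\beta=0$; this is routine for Schwartz inputs but must be stated carefully because the weight $|x|^{2\beta}$ creates a singularity at the origin for $\beta<0$ and we implicitly need two-sided differentiability of $F$ in $\beta$ in order to read off $F'(0)\ge 0$ (or, at minimum, a right-derivative argument using that $F(\beta)/\beta\ge 0$ for $\beta>0$). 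Once this justification is in place, the rest is the straightforward gamma-function computation above.
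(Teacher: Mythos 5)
Your proposal is correct and follows essentially the same route as the paper: deduce the logarithmic inequality from the sharp $L^2$ Pitt inequality of Theorem~\ref{thm-3} by noting that the difference of the two sides vanishes at $\beta=0$ (Plancherel), is of one sign for $\beta>0$, and hence has a signed right derivative at $0$, whose computation via differentiation under the integral and the $\psi$-function identity $\bigl(C^{2}\bigr)'(0)=-\tfrac{4}{a}\bigl(\psi(\tfrac{\lambda_{k}}{a}+\tfrac12)+\ln a\bigr)$ gives exactly the stated bound. Apart from the harmless reparametrization ($|x|^{2\beta}$ versus the paper's $|x|^{\beta}$ with constant $c^{2}(\beta/2,\lambda_{k},a)$) and the same integrability check of $\ln|x|$ near the origin, this coincides with the paper's argument.
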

\begin{proof}
Let us write
inequality~\eqref{pitt-ineq} in the following form
\[
\int_{\R^{d}}|y|^{-\beta}|\cF_{k,a}(f)(y)|^{2}\,d\mu_{k,a}(y)\le
c^{2}(\beta/2,
\lambda_{k},a)\int_{\R^{d}}|x|^{\beta}|f(x)|^{2}\,d\mu_{k,a}(x),
\]
where $0\le\beta<2\lambda_{k}+a$.
For $\beta\in \left(-(2\lambda_{k}+a),2\lambda_{k}+a\right)$, we define the function
\[
\varphi
(\beta)=\int_{\R^{d}}|y|^{-\beta}|\cF_{k,a}(f)(y)|^{2}\,d\mu_{k,a}(y)-
c^{2}(\beta/2,\lambda_{k},a)\int_{\R^{d}}|x|^{\beta}|f(x)|^{2}\,d\mu_{k,a}(x).
\]
Since $|\beta|<2\lambda_{k}+a$ and $f,\,\cF_{k,a}(f)\in
\mathcal{S}(\R^{d})$, then
\[
\int_{|x|\le 1}|\!\ln(|x|)||x|^{\beta}v_{k,a}(x)\,dx=
\int_{0}^{1}|\!\ln(r)|r^{\beta+2\lambda_{k}+a-1}\,dr\int_{\mathbb{S}^{d-1}}v_{k}(x')\,dx'<\infty,
\]
which implies
\[
|y|^{-\beta}\ln(|y|)|\cF_{k,a}(f)(y)|^{2}v_{k,a}(y)\in L^{1}(\R^{d})\,\,
\text{and}\,\, \ln(|x|)|x|^{\beta}|f(x)|^{2}v_{k,a}(x)\in L^{1}(\R^{d}).
\]
Hence,
\begin{multline}\label{eq4.4}
\varphi'(\beta)=-\int_{\R^{d}}|y|^{-\beta}\ln(|y|)|\cF_{k,a}(f)(y)|^{2}\,d\mu_{k,a}(y)
\\
-c^{2}(\beta/2,\lambda_{k},a)\int_{\R^{d}}|x|^{\beta}\ln(|x|)|f(x)|^{2}\,d\mu_{k,a}(x)
\\
-\frac{dc^{2}(\beta/2,\lambda_{k},a)}{d\beta}\int_{\R^{d}}|x|^{\beta}|f(x)|^{2}\,d\mu_{k,a}(x).
\end{multline}
Pitt's inequality and Plancherel's theorem imply that $\varphi (\beta)\le 0$ for $\beta>0$ and
$\varphi (0)=0$ correspondingly, hence
\[
\varphi'(0)=\lim_{\beta\to 0{+}}\frac{\varphi
(\beta)-\varphi(0)}{\beta}\le 0.
\]
Combining \eqref{eq4.4} and
\[
-\frac{dc^{2}(\beta/2,\lambda_{k},a)}{d\beta}\biggr|_{\beta=0}=
\frac{2}{a}\,\Bigl\{\psi\Bigl(\frac{\lambda_{k}}{a}+\frac{1}{2}\Bigr)+\ln a\Bigr\},
\]
we conclude the proof.
\end{proof}

\begin{remark}
In the proof of Theorem~2.1 of the paper \cite{GorIvaTik15}, sharp Pitt's
inequality in $L^{2}$ for the Hankel transform $H_\lambda$ was proved for
$\lambda>-1$. Therefore, in Theorems~\ref{thm-2}, \ref{thm-3}, and \ref{thm-4}
the conditions $4\lambda+a\ge 0$ and $4\lambda_{k}+a\ge 0$ can be replaced by
the condition $2\lambda+a>0$ and $2\lambda_{k}+a>0$ respectively.
\end{remark}

\section{Final remarks }\label{last}

The unitary operator $\cF_{k,a}$ on $L^{2}(\R^{d},d\mu_{k,a})$ can be expressed
as an integral transform \cite[(5.8)]{BenKobOrs12}
\[
\cF_{k,a}(f)(y)=\int_{\R^{d}}B_{k,a}(y,x)f(x)\,d\mu_{k,a}(x)
\]
with a symmetric kernel $B_{k,a}(x,y)$. In particular, $B_{0,2}(x,y)=e^{-i\<x,y\>}$.

A study of properties of the kernel $B_{k,a}(x,y)$ and, in particular, the conditions for
its uniform boundedness is an important problem. To illustrate,
note that if $|B_{k,a}(x,y)|\le M$, then the Hausdorff--Young inequality holds:
\[
\bigl\|\cF_{k,a}(f)\bigr\|_{p',d\mu_{k,a}}\le
M^{2/p-1}\bigl\|f\bigr\|_{p,d\mu_{k,a}},\qquad
1\le p\le 2.
\]
Therefore, it is important to know when
\begin{equation}\label{Bka-1}
|B_{k,a}(x,y)|\le B_{k,a}(0,y)=1,\qquad x,y\in \R^{d},
\end{equation}
which guaranties the accuracy of the Hausdorff--Young inequality with
constant~$1$. Moreover, one can define the generalized translation operator,
which allows to define the convolution \cite{Ros02}, the notion of modulus of
continuity \cite{IvaIva11,IvaHa15}, and different constructive and
approximation properties.

If $a=1/r$, $r\in \N$, $2\<k\>+d+a>2$, then
$\cF_{k,a}^{-1}(f)(x)=\cF_{k,a}(f)(x)$ \cite[Th.~5.3]{BenKobOrs12}, i.e., for
$f\in L^{2}(\R^{d},d\mu_{k,a})$,
\[
f(x)=\int_{\R^{d}}B_{k,a}(x,y)\cF_{k,a}(f)(y)\,d\mu_{k,a}(y).
\]
If condition \eqref{Bka-1} holds, the generalized translation operator is defined by
\[
T^{t}(f)(x)=\int_{\R^{d}}B_{k,a}(t,y)B_{k,a}(x,y)\cF_{k,a}(f)(y)\,d\mu_{k,a}(y),\qquad
t\in \R^{d}.
\]
Similarly, if $a=2/(2r+1)$, $r\in \Z_{+}$, $2\<k\>+d+a>2$, then
$\cF_{k,a}^{-1}(f)(x)=\cF_{k,a}(f)(-x)$ \cite[Th.~5.3]{BenKobOrs12}, that is,
\[
f(x)=\int_{\R^{d}}B_{k,a}(-x,y)\cF_{k,a}(f)(y)\,d\mu_{k,a}(y)
\]
and
\[
T^{t}(f)(x)=\int_{\R^{d}}B_{k,a}(-t,y)B_{k,a}(-x,y)\cF_{k,a}(f)(y)\,d\mu_{k,a}(y),\qquad
t\in \R^{d}.
\]
The operators act in $L^{2}(\R^{d},d\mu_{k,a})$ and $\|T^{t}\|=1$.

If $d=1$, using \cite[Sect.~5.4]{BenKobOrs12}, we can define
\[
B_{k,a}^\text{even}(x,y)=\frac{1}{2}\,\bigl[B_{k,a}(x,y)+B_{k,a}(x,-y)\bigr].
\]
Then
\[
B_{k,a}^\text{even}(x,y)=j_{(2k-1)/a}\bigl(\frac{2}{a}\,|xy|^{a/2}\bigr).
\]
For $2k+1+a>2$ we have $(2k-1)/a>-1$. The inequality
$|B_{k,a}^\text{even}(x,y)|\le 1$ holds only when $(2k-1)/a\ge -1/2$ or,
equivalently, $2k+a/2\ge 1$. In this case the generalized translation operator
can be defined by the formula
\[
T^{t}(f)(x)=\int_{\R^{d}}B_{k,a}^\text{even}(t,y)B_{k,a}(\pm x,y)\cF_{k,a}(f)(y)\,d\mu_{k,a}(y),\qquad
t\in \R^{1}.
\]

\begin{thma}
Assume that
\[
2\<k\>+d+a>2.
\]
Inequality \eqref{Bka-1} may not be true in general.
\end{thma}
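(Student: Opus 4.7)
The plan is to exhibit an explicit counterexample in the one-dimensional setting, exploiting the formula for the even part of the kernel recorded at the end of Section~\ref{last}. The guiding idea: the condition $2\langle k\rangle+d+a>2$ is considerably weaker than what is needed for the index $(2k-1)/a$ of the relevant normalized Bessel function to fall in the range where $|j_\nu|\le 1$, so we choose parameters at which these two conditions split.

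First, I would work with $d=1$, $k=0$, and $a\in(1,2)$. Then $2\langle k\rangle+d+a=1+a>2$, so the hypothesis of the Proposition is satisfied. On the other hand, $(2k-1)/a=-1/a\in(-1,-1/2)$, and the paper already observes that the identity
\[
B_{k,a}^{\text{even}}(x,y)=j_{(2k-1)/a}\!\Bigl(\tfrac{2}{a}|xy|^{a/2}\Bigr)
\]
together with the elementary check that $|j_\nu|\le 1$ fails precisely when $\nu<-1/2$ gives us $\sup_{x,y}|B_{0,a}^{\text{even}}(x,y)|>1$.

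Next I would make the failure of $|j_\nu|\le 1$ quantitative: using the classical asymptotic $J_\nu(t)\sim\sqrt{2/(\pi t)}\cos(t-\nu\pi/2-\pi/4)$ and the normalization $j_\nu(t)=2^\nu\Gamma(\nu+1)t^{-\nu}J_\nu(t)$, the amplitude of $j_\nu$ grows like $t^{-\nu-1/2}$ as $t\to\infty$. For $\nu=-1/a\in(-1,-1/2)$ one has $-\nu-1/2>0$, so $|j_{-1/a}(t)|\to\infty$ along a sequence $t_m\to\infty$. Choosing $x_m,y_m>0$ with $\tfrac{2}{a}(x_my_m)^{a/2}=t_m$ yields $|B_{0,a}^{\text{even}}(x_m,y_m)|\to\infty$.

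Finally, I would transfer this blow-up to $B_{k,a}$ itself by the trivial bound
\[
|B_{0,a}^{\text{even}}(x,y)|\le \tfrac{1}{2}\bigl(|B_{0,a}(x,y)|+|B_{0,a}(x,-y)|\bigr),
\]
which is how $B^{\text{even}}$ was defined. Hence for each $m$ at least one of $|B_{0,a}(x_m,y_m)|$ or $|B_{0,a}(x_m,-y_m)|$ exceeds $|B_{0,a}^{\text{even}}(x_m,y_m)|$, so certainly exceeds $1=B_{0,a}(0,\cdot)$ for $m$ large, contradicting \eqref{Bka-1}. The main technical point to be careful about is the verification that $j_\nu$ really is unbounded for $\nu\in(-1,-1/2)$, but this is a standard consequence of the Bessel asymptotics, so no serious obstacle arises. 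The whole argument is essentially a parameter-counting observation packaged around the formula for $B_{k,a}^{\text{even}}$ that the authors have already recorded.
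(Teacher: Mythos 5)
Your argument is correct and establishes the Proposition, but it takes a lighter route than the paper does. The paper's own proof (the Example in Section~\ref{last}) also works in $d=1$ with the explicit kernel \cite[(5.18)]{BenKobOrs12}, but it fixes $a=1$ and varies $k$: using the Bessel recurrences it rewrites $B_{k,1}$ as $j_{2k}(t)$ for $xy\le 0$ and as $2^{2k}\Gamma(2k)t^{1-2k}J_{2k}'(t)$ for $xy\ge 0$, and the asymptotics of $J_{2k}'$ then show unboundedness for $0<k<1/4$, with further (partly numerical) analysis for $1/4<k<1/2$ and an integral representation giving $|B_{k,1}|\le 1$ for $k\ge 1/2$. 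You instead fix $k=0$ and vary $a\in(1,2)$, so that $2\langle k\rangle+d+a=1+a>2$ while $\nu=(2k-1)/a=-1/a\in(-1,-1/2)$; you work only with the even part $B_{0,a}^\text{even}(x,y)=j_{-1/a}\bigl(\tfrac{2}{a}|xy|^{a/2}\bigr)$, note that the envelope of $j_{\nu}(t)$ grows like $t^{-\nu-1/2}\to\infty$, and transfer the blow-up to $B_{0,a}$ itself via $|B_{0,a}^\text{even}(x,y)|\le\tfrac12\bigl(|B_{0,a}(x,y)|+|B_{0,a}(x,-y)|\bigr)$. This avoids the recurrence manipulations and the computer calculations, and it shows that \eqref{Bka-1} already fails for the $a$-deformed Fourier transform with trivial multiplicity; what it does not give is the finer picture of the paper's Example along the line $a=1$ (the threshold $k_0\approx 0.44$ and the positive results for $k\ge 1/2$), which is extra information not needed for the Proposition. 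Two minor points: your argument relies on the closed formula (5.18) (equivalently the even-part identity) being valid for $k=0$, $a\in(1,2)$, which is exactly the range $2k>1-a$ in which the paper quotes it, so this is legitimate; and in your last step ``exceeds $|B_{0,a}^\text{even}(x_m,y_m)|$'' should read ``is at least $|B_{0,a}^\text{even}(x_m,y_m)|$'', which suffices since that quantity tends to infinity.
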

Cf. \cite[Th.~5.11]{BenKobOrs12} and
\cite[L. 2.13]{Joh15}.
To prove Proposition, we construct the following

\begin{example} Let $d=1$, $\<k\>=k\ge 0$, and
$2\<k\>>1-a$.
First,
we remark that the kernel $B_{k,a}$ can be given by
\cite[(5.18)]{BenKobOrs12}
\[
B_{k,a}(x,y)=j_{(2k-1)/a}\Bigl(\frac{2}{a}\,|xy|^{a/2}\Bigr)+
\frac{\Gamma((2k-1)/a+1)}{\Gamma((2k+1)/a+1)}\,\frac{xy}{(ai)^{2/a}}\,
j_{(2k+1)/a}\Bigl(\frac{2}{a}\,|xy|^{a/2}\Bigr).
\]
Therefore, if $a=1$ and $k>0$, we get
\[
B_{k,1}(x,y)=j_{2k-1}(t)-\sign{}(xy)\,\frac{(t/2)^{2}}{2k(2k+1)}\,j_{2k+1}(t),\qquad
t=2|xy|^{1/2}.
\]
Let us now investigate when $|B_{k,1}(x,y)|\le 1$, $x,y\in \R$, for different values of $k$.

Taking into account the known properties of the Bessel function
\begin{align*}
&J_{\nu-1}(t)+J_{\nu+1}(t)=2\nu t^{-1}J_{\nu}(t),\\
&J_{\nu-1}(t)-J_{\nu+1}(t)=2J_{\nu}'(t),
\end{align*}
we get for $\nu=2k$ that
\[
B_{k,1}(x,y)=\begin{cases}
j_{2k}(t),&xy\le 0,\\
2^{2k}\Gamma(2k)t^{1-2k}J_{2k}'(t),&xy\ge 0.
\end{cases}
\]
Hence, $|B_{k,1}(x,y)|\le 1$ for $xy\le 0$ and for any $k\ge 0$.

\medbreak
\underline{Case $0<k<1/4$}. Using asymptotic formula for the derivative of the
Bessel function
\[
J_{\nu}'(t)=-\sqrt{\frac{2}{\pi t}}\,\bigl\{\sin{}(t-\nu
\pi/2-\pi/4)+O(t^{-1})\bigr\},\qquad t\to +\infty,
\]
we obtain that the kernel $B_{k,1}(x,y)$ is not bounded for $0<k<1/4$, $xy>0$,
and is uniformly bounded for $k\ge1/4$, $x,y\in \R$.

\medbreak
\underline{Case $k=1/4$}. Using $J_{1/2}(t)=(\pi t/2)^{-1/2}\sin t$, we get for
$xy>0$
\[
B_{1/4,1}(x,y)=2(\cos t-t^{-1}\sin t)
\]
and then $\max_{x,y\in \R} |B_{1/4,1}(x,y)|\approx 2.13$.

\medbreak
\underline{Case $1/4<k<1/2$}. Easy computer calculations show that
$|B_{k,1}(x,y)|\le M_{k}$ for $x,y\in \R_{+}$, where $M_{k}=\max_{x,y\in
\R_{+}}|B_{k,1}(x,y)|>1$ for $k\in (1/4,k_{0})$ and $M_{k}=1$ for $k\in
[k_{0},1/2)$. Moreover, $k_{0}\approx 0.44$. The number $k_{0}$ can be found
from the condition that the first minimum of the function
$2^{2k}\Gamma(2k)t^{1-2k}J_{2k}'(t)$ for $t>0$ is equal to $-1$.

\medbreak
\underline{Case $k\ge 1/2$}. For the kernel $B_{k,1}$, the following integral
representation with a nonnegative weight holds: \cite[(5.17),
(5.19)]{BenKobOrs12}
\[
B_{k,1}(x,y)=\frac{\Gamma(k+1/2)}{\Gamma(k)\Gamma(1/2)}\int_{-1}^{1}
j_{k-1}\bigl(\sqrt{2|xy|(1+\sign{}(xy)u)}\,\bigr)(1+u)(1-u^2)^{k-1}\,du.
\]
Since $|j_{\lambda}(t)|\le 1$ for $t\in \R$ and $\lambda\ge -1/2$, then
$|B_{k,1}(x,y)|\le B_{k,1}(0,0)=1$, $x,y\in \R$ for any $k\ge 1/2$.

\end{example}

We formulate the following

\begin{conj}
Inequality \eqref{Bka-1} holds whenever $2\<k\>+d+a\ge3$.
\end{conj}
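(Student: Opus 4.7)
The plan is to establish an integral representation of the kernel of the form
\[
B_{k,a}(x,y)=\int_{\Omega}j_{\nu}\bigl(\Phi(x,y,\omega)\bigr)\,d\rho_{k,a}(\omega),
\]
where $\rho_{k,a}$ is a nonnegative probability measure on a compact set $\Omega$, $\Phi$ is a real-valued function on $\R^{d}\times\R^{d}\times\Omega$, and $\nu=\nu(k,a,d)$ is such that $\nu\ge-1/2$ precisely when $2\<k\>+d+a\ge3$. Granted such a representation, the bound $|j_{\nu}(t)|\le j_{\nu}(0)=1$ valid for $\nu\ge-1/2$ combined with the triangle inequality gives $|B_{k,a}(x,y)|\le\rho_{k,a}(\Omega)=1$, which is \eqref{Bka-1}. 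The normalization is forced by evaluation at $x=0$, using $B_{k,a}(0,y)=1$ and $j_{\nu}(0)=1$.

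The first step is to reduce to the radial/spherical situation by the Bochner-type identity \eqref{boch-ident}: expand $B_{k,a}(x,\cdot)$ in $k$-spherical harmonics, so that on the subspace $\cR_{n}^{d}(v_{k,a})$ the kernel is (up to a harmonic factor) the radial Hankel-type kernel $j_{2(\lambda_{k}+n)/a}\bigl(\tfrac{2}{a}(|x||y|)^{a/2}\bigr)$. Summing these contributions via the reproducing kernel for $\cH_{n}^{d}(v_{k})$ and R\"osler's positive intertwining representation of the Dunkl kernel reduces the angular sum to an integral against a probability measure supported in the convex hull of the orbit $G\cdot y$, of the form $\int j_{\lambda_{k}-1/2+\cdots}(\cdots)\,d\mu_{x}(\xi)$. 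This handles the case $a=2$ directly, since positivity of $V_{k}$ is classical.

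The second step is to insert the $a$-deformation. The idea is to use a Sonine-type Laplace integral to re-index the normalized Bessel function: for $\mu>\nu\ge-1/2$,
\[
j_{\mu}(t)=\frac{2\Gamma(\mu+1)}{\Gamma(\nu+1)\Gamma(\mu-\nu)}\int_{0}^{1}j_{\nu}(ts)s^{2\nu+1}(1-s^{2})^{\mu-\nu-1}\,ds,
\]
which has a nonnegative weight precisely when $\mu>\nu$. Applying this with $\mu=2(\lambda_{k}+n)/a$ and $\nu$ chosen so that $2\nu+1=\tfrac{2}{a}(2\<k\>+d+a-3)$ (so $\nu\ge-1/2$ iff $2\<k\>+d+a\ge3$), and combining with the angular representation from the first step, produces the desired integral representation with nonnegative weight. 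The boundary case $\nu=-1/2$, corresponding to $2\<k\>+d+a=3$, should recover exactly the sharp representation shown in the Example for $d=1,a=1,k=1/2$.

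The main obstacle will be controlling the interaction between the angular intertwining measure (from R\"osler) and the radial Sonine weight: these must combine into a single nonnegative probability measure rather than a signed one, which requires verifying that the formal summation over $n$ of the angular-radial product converges to the correct Laplace-type integral. A cleaner alternative, which I would attempt in parallel, is to work directly from the eigenfunction expansion using $\Phi_{n,j,s}^{a}$ and a Hille--Hardy / Mehler-type closed form for the generating series of the Laguerre polynomials $L_{s}^{(\lambda_{k,a,n})}$, then apply positivity of the resulting Poisson-type kernel in the regime $2\<k\>+d+a\ge3$. Either route hinges on identifying the precise threshold at which the Bessel index $\nu$ descends to $-1/2$, and the arithmetic $2\<k\>+d+a=3\Leftrightarrow\nu=-1/2$ is exactly what the examples in the paper predict.
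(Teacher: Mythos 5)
You should first note that the paper offers no proof of this statement: it is posed explicitly as a conjecture, supported only by the explicit computations for $d=1$, $a=1$ (the Example) and a reference to De Bie's results for $d=2$; the accompanying Proposition even shows that \eqref{Bka-1} can fail below the conjectured threshold. So there is nothing in the paper to compare your argument against, and the question is whether your proposal closes the gap on its own. It does not, for two concrete reasons beyond the obstacle you yourself flag.

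First, your Ansatz $B_{k,a}(x,y)=\int_{\Omega}j_{\nu}(\Phi(x,y,\omega))\,d\rho_{k,a}(\omega)$ with $\Phi$ real-valued and $\rho_{k,a}$ a nonnegative measure forces the kernel to be real-valued, but $B_{k,a}$ is genuinely complex: already $B_{0,2}(x,y)=e^{-i\<x,y\>}$, and for $d=1$ the paper records an odd component proportional to $(ai)^{-2/a}xy\,j_{(2k+1)/a}(\cdot)$ on top of $B^{\text{even}}_{k,a}$. Any workable representation must integrate a complex integrand of modulus at most one (as in R\"osler's $E_k(x,-iy)=\int e^{-i\<\xi,y\>}d\mu_x(\xi)$ for $a=2$, or the $(1+u)$-weighted formula quoted in the Example for $a=1$), and then the elementary bound $|j_{\nu}(t)|\le 1$ for $\nu\ge-1/2$ is no longer the mechanism; the threshold must be located inside a different positivity statement. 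Second, and more fundamentally, the step where you ``sum these contributions via the reproducing kernel'' over $n$ is precisely where the argument has no support for $a\ne 2$: the spherical-harmonic components of $B_{k,a}$ carry Bessel functions $j_{2(\lambda_k+n)/a}(\tfrac2a(\rho r)^{a/2})$ whose order grows like $2n/a$ while the harmonic factor has homogeneity $n$, together with phases $e^{-i\pi n/a}$. For $a=2$ these recombine through the Gegenbauer--Bochner addition formula into the Dunkl kernel, whose positivity properties are classical; for $a\ne2$ no addition theorem of this kind is known, and neither the Sonine re-indexing nor the Hille--Hardy resummation (which at the required boundary point $|w|=1$ is exactly the delicate analytic continuation of Ben Sa\"{\i}d--Kobayashi--{\O}rsted) produces a manifestly nonnegative measure. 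Your numerology $\nu\ge-1/2\Leftrightarrow 2\<k\>+d+a\ge3$ is indeed the evidence on which the conjecture rests, but identifying the threshold is not the same as proving the representation; the conjecture remains open after your argument.
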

In particular, we expect that if $d\ge 3$, then inequality \eqref{Bka-1} always holds.
Calculations above for the case $d=1$ and results of the paper
\cite{DeB12} for $d=2$ show that
the condition $2\<k\>+d+a\ge3$ is only sufficient for \eqref{Bka-1} to hold.

\end{document}